\documentclass{paper}
\usepackage[english]{babel}
\usepackage{enumerate,amsmath,amsthm,amsfonts,pifont,slashed,amssymb,ifsym,mathrsfs,graphicx,graphics,yfonts,calligra,
latexsym,txfonts
}
\usepackage{hyphenat}
\usepackage[utf8]{inputenc}
\usepackage{authblk}

\usepackage{array}
\usepackage{amssymb}
\usepackage{enumerate}
\usepackage{fancyhdr}
\usepackage{layout}
\usepackage{multicol}
\usepackage{pstricks}
\usepackage{tikz-cd}

\usepackage[colorlinks]{hyperref}
 \hypersetup{linkcolor=blue}
\usepackage[all]{xy}
\usepackage[sort]{cite}

\newtheorem{theorem}{Theorem}[section]
\newtheorem{proposition}[theorem]{Proposition}
\newtheorem{lemma}[theorem]{Lemma}
\newtheorem{corollary}[theorem]{Corollary}

\theoremstyle{definition}
\newtheorem{definition}[theorem]{Definition}
\newtheorem{exm}[theorem]{Example}

\theoremstyle{remark}
\newtheorem{remark}[theorem]{Remark}

\newcommand{\pt}{\operatorname{pt}}

\newcommand{\op}{\operatorname{op}}
\newcommand{\id}{\operatorname{id}}

\newcommand{\Mod}{\textrm{-}\mathcal{M}\!\!\:\mathit{od}}

\newcommand{\TMV}{{}^{\operatorname{MV}}\!\mathcal{T}\!\!\operatorname{op}}
\newcommand{\TMVL}{{}^{\operatorname{LMV}}\!\mathcal{T}\!\!\operatorname{op}}

\newcommand{\MVfm}{^{\operatorname{MV}}\!\mathcal{F}\!\!\operatorname{rm}}
\newcommand{\LMVfm}{^{\operatorname{LMV}}\!\mathcal{F}\!\!\operatorname{rm}}

\newcommand{\MVLc}{^{\operatorname{MV}}\!\mathcal{L}\!\operatorname{oc}}
\newcommand{\MVSob}{^{\operatorname{MV}}\!\mathcal{S}\!\!\operatorname{ob}}
\newcommand{\MVslc}{^{\operatorname{MV}}\!\mathcal{SL}\!\!\operatorname{oc}}

\renewcommand{\hom}{\operatorname{hom}}

\newcommand{\e}{\varepsilon}
\renewcommand{\O}{\Omega}

\newcommand{\cou}{{^\leftarrow}}
 
\newcommand{\0}{\mathbf{0}}

\renewcommand{\1}{\mathbf{1}}
\renewcommand{\2}{\mathbf{2}}
\newcommand{\n}{\mathbf}

\begin{document}

\title{Point-free MV-topologies}
\author[1]{Marby Zuley Bolaños Ortiz}
\affil[1]{Departamento de Matem\'aticas, Universidad del Valle -- Cali, Colombia \newline marby.bolanos@correounivalle.edu.co}
\author[2]{Luz Victoria De La Pava}
\affil[2]{Departamento de Matem\'aticas, Universidad del Valle -- Cali, Colombia \newline victoria.delapava@correounivalle.edu.co}
\author[3]{Ciro Russo}
\affil[3]{Departamento de Matem\'atica, Universidade Federal da Bahia -- Salvador, BA, Brazil \newline ciro.russo@ufba.br}
\renewcommand\Affilfont{\itshape\small}
\date{}
\maketitle
\begin{abstract}
We propose a point-free approach to MV-topological spaces in the wake of previous works on both classical and fuzzy topology. In order to do that, we introduce suitable frame-type structures and a class of fuzzy topological spaces which includes and suitably extends the one of MV-topological spaces. Then we show an adjoint situation between such structures, and restrict such an adjointness to a duality between the corresponding classes of ``spatial frames'' and ``sober spaces''. We also use neighbourhood systems to characterize sobriety in this context.
\end{abstract}
\section{Introduction}

The adjunction  between  the category of topological spaces and that of local lattices was established by D. Papert and S. Papert \cite{papert-papert} and later described in more detail  by J. R. Isbell \cite{isbell1972atomless}. Such results led to the introduction of sober topological spaces and spatial locales. Eventually, all that has been extended to other contexts, including fuzzy topology.

In \cite{algebroids}, Solovyov proved an extension of the aforementioned adjunction to quantale algebroids, which are categories enriched over a quantale, and that approach also provides a more general framework for exploring concepts such as stratified fuzzy topological spaces. Later on, D.-X. Zhang and G. Zhang \cite{zhang2022} presented an adjunction between the category of quantale modules and the one of topological spaces valued in a commutative and unital quantale which include, as a special case, Lowen's fuzzy topological space \cite{low}.

MV-topological spaces are a special class of fuzzy topological spaces (in the sense of Chang \cite{chal}) introduced by the third author with the aim of extending Stone Duality to fuzzy topology and MV-algebras, which are arguably the most natural generalization of Boolean algebras in the realm of fuzzy logics. Eventually, MV-topological spaces were further investigated \cite{tesis,dlprarc,dlprsoco} and provided, among other results, an embedding of any MV-algebra in a sheaf of lattice-ordered groups over a fuzzy topological space (an MV-space, in fact). In a more recent paper, such spaces were used to extend also Priestley Duality \cite{borrus1}.

Aim of this paper is to propose a point-free approach to MV-topologies, by extending the ideas of D.-X. Zhang and G. Zhang to a framework compatible with Chang's definition of fuzzy topology \cite{chal} and which includes as a particular case the MV-topo\-logical spaces \cite{rusfuz}. More precisely, we first define $D$-laminated MV-spaces and $D$-laminated MV-frames, and we prove the existence of an adjunction between such categories, in the wake of the aforementioned analogous works. Then we define sobriety and spatiality, respectively, for such structures, and characterize them by means of the unit and counit of the adjunction. Last, we introduce neighbourhood systems for $D$-laminated MV-spaces, and characterize the sober ones by means of such systems. 

The paper is organized as follows.

Since many ordered algebraic structures are involved, we recall the basic notions about them in Section \ref{prelsec}, along with some properties.

In Section \ref{framesec} we introduce $D$-laminated MV-topological spaces and $D$-laminated MV-frames (or $D$-frames), where $D$ is a subquantale of the quantale reduct of the MV-algebra $[0,1]$. The class of all $\{0,1\}$-laminated MV-spaces coincide with the one of all MV-topological spaces \cite{rusfuz}, while $[0,1]$-laminated ones are precisely the MV-spaces satisfying Lowen's definition of fuzzy topological space \cite{low}. On their turn, $D$-laminated MV-frames are the point-free structures corresponding to such spaces, as we shall see in the subsequent sections.

The adjunction between $D$-laminated MV-spaces and $D$-frames is presented in Section \ref{adjsec} (Theorem \ref{adjuncion P-I}). The adjoint pair of functors is eventually restricted to the categories of suitably defined \emph{sober} $D$-laminated MV-spaces and \emph{spatial} $D$-frames -- Definitions \ref{sober} and \ref{spatial} -- thus yielding the duality of Corollary \ref{dualityth}. 

Last, in Section \ref{sistemas}, we shall introduce neighbourhood systems for MV-topological spaces and use them to provide, in Proposition \ref{soberchar}, a characterization of sober $D$-laminated MV-spaces.

\section{Preliminary notions}\label{prelsec}

As we anticipated, in this section we shall recall the main notions about algebraic and topological structures of our interest, the main two ones being quantales and MV-algebras.

Quantales can be viewed as a generalization of frames. In fact, they were introduced by Mulvey precisely with the aim of providing a mathematical structure on which founding non-commutative topology and, consequently, quantum physics \cite{mul}. Eventually, they proved to be of great interest in various research areas of mathematics and physics, among which the fuzzy topology. 

\begin{definition}
	A \emph{sup-lattice} is a complete lattice in a category  in which morphisms are maps preserving arbitrary joins.
	
A \emph{quantale} is a structure $(Q, \bigvee, \&)$ such that
\begin{enumerate}[(i)]
	\item $(Q, \bigvee)$ is a sup-lattice,
	\item  $(Q,\&)$  is a semigroup,
	\item 
	 $a \&\bigvee\limits_{b \in B} b=\bigvee\limits_{b\in B}(a \& b)$ and $ \left(\bigvee B\right)\& a = \bigvee\limits_{b\in B}(b \& a)$.
\end{enumerate}
A quantale is
\begin{itemize}
	\item \emph{commutative} if so is the multiplication,
	\item \emph{unital} if there exists an element $1 \in Q$ such that $(Q, \&,1)$ is a monoid,
	\item \emph{integral} if it is unital and the unit $1$ is the top element of $Q$.
\end{itemize}
	\end{definition}
Complete Heyting algebras are an  example of commutative integral quantales with $\wedge$ as $\&$. The same holds for complete MV-algebras, with $\odot$ being the semigroup operation.
\begin{definition} Let $P$ and $Q$ be quantales. A map $f:P \to Q$ is a \emph{quantale homomorphism} if $f$ preserves the multiplication and arbitrary suprema. If $P$ and $Q$ are unital, then $f$ is a unital homomorphism if in addition it preserves the unit.

Due to the ring-like appearance of quantales, quantale modules and quantale algebras appeared naturally in the literature.
\begin{definition}
Given a unital quantale $(Q, \bigvee, \&, 1)$, a left $Q$-\emph{module} is a pair $(M,*)$ where $M$ is a sup-lattice and $*: Q \times M \to M$ is a map, called \emph{scalar multiplication}, such that the following conditions hold:
\begin{enumerate}[(i)]
	\item $(q \& r)* \alpha= q*(r*\alpha)$ for all $q,r \in Q$ and $\alpha \in M$,
		\item $q * \left(\bigvee\limits_{i\in I}\alpha_{i}\right)=\bigvee\limits_{i \in I}(q*\alpha_{i})$ for all $q \in Q$ and $\{\alpha_{i}\}_{i \in I } \subseteq M$,
		\item $\left(\bigvee\limits_{j \in J} q_{j} \right) *\alpha=\bigvee\limits_{j \in J}(q_{j}*\alpha) $ for all $\{q_{j}\}_{j \in J} \subseteq Q$ and $\alpha \in M$
\item $1*\alpha= \alpha$ for all $\alpha \in M$.	
\end{enumerate} 
\end{definition}
Given two $Q$-modules $M$ and $N$, a map $f:M \to N$ is a $Q$-\emph{module homomorphism} if it preserves $\bigvee$ and the scalar multiplication, i.e., $f(q *^{M}\alpha)=q *^{N}f(\alpha)$ for all $\alpha \in M$ and $q \in Q$
	\end{definition}

\begin{definition}[\cite{solovyov2016}]
	Let $(Q, \bigvee, \&,1)$ be a unital commutative quantale. A \emph{$Q$-algebra} is a triple $(A, \cdot, *)$ such that
	\begin{enumerate}[(Q1)]
		\item $A$ is a sup-lattice 
		\item $(A, *)$ is a $Q$-module
		\item $(A, \cdot)$ is a quantale
		\item $q*(\alpha_{1} \cdot \alpha_{2})=(q *\alpha_{1})\cdot \alpha_{2}=\alpha_{1} \cdot (q *\alpha_{2})$ for every $\alpha_{1}, \alpha_{2} \in A$, $q\in Q$.
	\end{enumerate}
A $Q$\emph{-algebra homomorphism} between $(M, \cdot^{M}, *^{M})$ and $(N, \cdot^{N},*^{N})$ is a map $f: M \to N$, which is both a quantale homomorphism and  a $Q$-module homomorphism, that is, satisfies the following conditions
	\begin{itemize}
		\item $f(1_{M})=1_{N}$,
		\item $f(\bigvee_{i \in I} \alpha_{i})=\bigvee_{i \in I} f(\alpha_{i})$,  for any family $\{\alpha_i\}_{i \in I} \subset M$,
		\item $f(\alpha \cdot^{M} \beta)=f(\alpha)\cdot^{N}f(\beta)$ for all $\alpha, \beta \in M$,
		\item $f(r *^{M}\alpha)= r *^{N} f(\alpha)$ for all $r \in Q$, $\alpha \in M$.
	\end{itemize}	
\end{definition}

MV-algebras were introduced by Chang C. C. \cite{cha2} as the algebraic counterpart (an equivalent algebraic semantics, in fact, as we would call it today) of \L ukasiewicz infinite-valued propositional logic. As well as \L ukasiewicz calculus resembles Classical Propositional Logic in various aspects, MV-algebras represent a sort of non-idempotent generalization of Boolean algebras and, from the point of view of the theory of topological dualities, they provide a good framework for fuzzy generalizations of classical results (see, e.g., \cite{rusfuz} and \cite{borrus1}).

\begin{definition}\label{mvalg}
An \emph{MV-algebra} is a structure $(A,\oplus,*,\0)$ where $\oplus$ is a binary operation, * is a unary operation and $\0$ is a constant such that the following axioms are satisfied for any $a,b\in A$:
\begin{description}
  \item[MV1.] $(A,\oplus,^*,\0)$ is an Abelian monoid,
  \item[MV2.] $(a^*)^*=a,$
  \item[MV3.] $\0^*\oplus a=\0^*,$
  \item[MV4.] $(a^*\oplus b)^*\oplus b=(b^*\oplus a)^*\oplus a.$
\end{description}
\end{definition}
We denote an MV-algebra $(A,\oplus,^*,\0)$ simply by its universe \emph{A}, and define the constant $\1$ and the derived operations $\odot$  and $\ominus$ as follows:
\begin{itemize}
  \item $\1:=\0^*$
  \item $a\odot b:= (a^*\oplus b^*)^*$
  \item $a\ominus b:= a\odot b^*$
\end{itemize}

Any MV-algebra $A$ is canonically equipped with a partial order defined by means of its basic operations. $A$ is a distributive lattice w.r.t. such an order, and both $\oplus$ and $\odot$ distribute over any existing join and meet in $A$. Therefore, any MV-algebra $(A, \oplus, ^*, 0)$ which is complete w.r.t. its natural order has a quantale reduct $(A, \bigvee, \odot, 1)$ which is commutative and integral. The most important example of MV-algebra is the real unit interval $[0,1]$ with $\oplus$ being the sum of reals truncated to 1, i. e., $x \oplus y = \min\{1, x+y\}$, and the involution $^*$ defined by $x^* = 1 - x$. The MV-algebraic ordering coincide with the usual order among real numbers, and 0 is obviously the identity for $\oplus$. Such an MV-algebra generates the whole variety of MV-algebras both as a variety and as a quasi-variety, thus playing the same role that the two-element Boolean algebra plays for the class of Boolean algebras. We refer the reader to \cite{mvbook} for further information on MV-algebras.

MV-topological spaces were introduced in \cite{rusfuz} with the aim of extending Stone duality to MV-algebras and fuzzy topological spaces. An  MV-topological space is basically a special fuzzy topological space in the sense of C. L. Chang \cite{chal}. 
\begin{definition}\label{mvtop}
	Let $X$ be a set, $A$ the MV-algebra $[0,1]^X$ and $\tau \subseteq A$. We say that $( X, \tau )$ is an 
\emph{MV-topological space} (or \emph{MV-space}) if 
	\begin{enumerate}[(i)]
		\item $\0, \1 \in \tau$,
		\item for any family $\{o_i\}_{i \in I}$ of elements of $\tau$, $\bigvee_{i \in I} o_i \in \tau$,
	\end{enumerate}
	and, for all $o_1, o_2 \in \tau$,
	\begin{enumerate}[(i)]
		\setcounter{enumi}{2}
		\item $o_1 \odot o_2 \in \tau$,
		\item $o_1 \oplus o_2 \in \tau$,
		\item $o_1 \wedge o_2 \in \tau$.
	\end{enumerate}
	The set $\tau$ is called an \emph{MV-topology} on $X$ and the elements of $\tau$ are the \emph{open sets} of $X$. The elements of set $\tau^{*} = \{\alpha^* \mid \alpha \in \tau\}$ are called the \emph{closed sets} of $X$.
	\end{definition}

\section{\emph{D}-laminated MV-spaces and \emph{D}-frames}\label{framesec}

In the present section we shall introduce the topological and algebraic structures that form the categories involved in the adjunction of Theorem \ref{adjuncion P-I}. We will also see some of their properties and their relationship with analogous structures in previous works.

\begin{definition}\label{$D$-space}
	Let $X$ be a set,  $D$ a  subquantale of the quantale reduct of the MV-algebra $[0,1]$ and $\tau \subseteq [0,1]^{X}$. We say that $\left( X, \tau\right)$ is an \emph{$D$-laminated MV-topological space} (or \emph{$D$-laminated MV-space})  if
	\begin{enumerate}[(i)]
		\item $ \1 \in \tau$,
		\item for any subset $O$ of $\tau$, $\bigvee O \in \tau$,\footnote{It may be worth recalling that $\0 = \bigvee \varnothing$, hence $\0 \in \tau$ too.}
		\item $r \odot o \in \tau$ for all $r \in D$ and $o \in \tau$, where $(r\odot o)(x)=r \odot o(x)$
	\end{enumerate}
	and, for all $o_1, o_2 \in \tau$,
	\begin{enumerate}[(i)]
		\setcounter{enumi}{3}
		\item $o_1 \odot o_2 \in \tau$,
		\item $o_1 \oplus o_2 \in \tau$,
		\item $o_1 \wedge o_2 \in \tau$.
	\end{enumerate}
\end{definition}
The structure of the family of open sets of a $D$-laminated MV-space is a quantale algebra over $D$. A $[0,1]$-laminated MV-space is called laminated or Lowen's MV-space and a $\{0,1\}$-laminated MV-space is just an MV-space. To avoid possible confusion between $[0,1]$ and $\{0,1\}$, we shall denote the two-element Boolean algebra $\{0,1\}$ by $\2$.

The continuity of a function between $D$-laminated MV spaces is defined in the usual fuzzy topological manner, that is, if $(X, \tau_{X})$ and $(Y, \tau_{Y})$ are two $D$-laminated MV-spaces, a map $f: X \to Y$  is continuous if the preimage $f^{\cou}(\alpha)= \alpha \circ f$ of every open $\alpha \in \tau_{Y}$ is open in $(X, \tau_{X})$.

Just as the definition of continuity naturally extends to $D$-laminated MV-spaces, the conditions of being $T_0$ and Hausdorff are also naturally formulated in this context.
\begin{definition}
    A $D$-laminated MV-space $(X, \tau)$ is $T_{0}$ if, for  $x\neq y$ in $X$, there is $\alpha \in \tau$ such that $\alpha(x)\neq \alpha(y)$.
\end{definition}
\begin{definition}
Let $(X, \tau)$ be a  $D$-laminated MV-space. $X$ is called a \emph{Hausdorff $D$-space} if, for all $x \neq y \in X$, there exist $\alpha_x, \alpha_y \in \tau$ such that
\begin{enumerate}[(i)]
\item $\alpha_x(x) = \alpha_y(y) = 1$,
\item $\alpha_x \wedge \alpha_y = \0$.
\end{enumerate}
\end{definition}
The category of $D$-laminated MV-spaces and continuous maps shall be denoted by $\TMV(D)$. Moreover, for $D=[0,1]$ and $D= \2$ we shall use $\TMVL$ and $\TMV$ respectively.

As we already mentioned, the adjunction proved by D. Papert, S. Papert, and J. R. Isbell has been extended to more general contexts. Let us now consider the following two generalizations of topological spaces for which results of that type hold.
\begin{definition}[\cite{solovyov2016}]\label{soldef}
  Let $Q$ be a unital quantale. A \emph{$Q$-topological space} (\emph{$Q$-space}) is a pair $(X, \tau)$ where $X$ is a set and $\tau$ is a unital subquantale of $Q^{X}$. Moreover, if $D$ is a subquantale of $Q$, a $Q$-space is called \emph{stratified of degree} $D$ provided that each constant map $\underline{q}$ with the value $q \in D$ is an open set. If the $Q$ is stratified of degree $Q$, it is called simply stratified.
\end{definition}
In \cite{solovyov2016}, an adjunction between the category of $Q$-algebras and the one of $A$-strat\-ified spaces, where $A$ is a $Q$-algebra, was proved.
In \cite{zhang2022} D.-X. Zhang and G. Zhang  consider the following definition, where $(Q, \cdot)$ is a commutative unital quantale. 
\begin{definition}[\cite{zhang2022}]\label{zhadef}
Let $Q$ be a commutative unital quantale. A $Q$-topology on a set $X$ is a subset $\tau$ of $Q^{X}$ verifying the following conditions:
\begin{enumerate}[i)]
    \item The constant map $1_{X}: X \to Q$ with value $1$ belong to $\tau$
    \item $\lambda \wedge \mu \in  \tau$ for all $\mu, \lambda \in  \tau$
    \item $\bigvee_{j \in J}\lambda_{j} \in  \tau$ for each subset $\{ \lambda_{j}\}_{j \in J}$ of $\tau$.
    \item $r \& \lambda \in  \tau$ for all $r \in Q$ and $r \in  \tau$
\end{enumerate}
\end{definition}
In the same paper, the authors proved an adjunction between the category of $Q$-spaces and the category $Q\Mod_{\wedge}$ whose objects are $Q$-modules and morphisms are module homomorphisms that preserve finite meets.

Let $\left(D, \bigvee, \odot,\n{1}\right)$   be an integral subquantale of $[0,1]$ and $(X, \tau)$ be a $D$-laminated MV-space. By the definition of MV-topology,  $\left(\tau, \bigvee, \odot,\n{1}\right)$ is a quantale too. Furthermore, by defining $(r \odot \alpha)(x)=r\odot \alpha(x)$ for $r \in D$, the triple $(\tau, \bigvee, \odot)$ is also a $D$-algebra. So we have:
\begin{itemize}
\item every $D$-laminated MV-space $(X, \tau)$ is a $[0,1]$-space stratified to degree $D$ according to Definition \ref{soldef};
\item every Lowen's  MV-space is a $[0,1]$-topological space in the sense of Definition \ref{zhadef}. However, it is worth noticing that, for $D \neq [0,1]$, the notions of $D$-topological space of Zhang and $D$-laminated MV-space do not match. indeed, while a $D$-topology over $X$ is a subset of $D^{X}$, a $D$-laminated MV-topology over $X$ is a subset of $[0,1]^{X}$.
\end{itemize}
Moreover, $\left(\tau, \bigvee, \wedge\right)$ is a frame and $\left(\tau, \wedge, \oplus, \1, \0 \right)$ is a semiring such that $\alpha \oplus \left(\bigvee_{i \in I}\beta_{i}\right)=\bigvee_{i \in I} \left(\alpha \oplus \beta_{i}\right)$, for all $\alpha \in \tau$ and $\{\beta_{i}\}_{i \in I} \subseteq \tau$. This leads the following definition.
\begin{definition}
	Let $D$ be a subquantale of the quantale reduct of the MV-algebra $[0,1]$. A \emph{$D$-laminated MV-frame} (\emph{$D$-frame} for short) is a structure $(M, \bigvee, \wedge,*,\cdot, +, 1,0)$ such that
	\begin{enumerate}
		\item $(M,\cdot, *)$ is a $D$-algebra,
		\item  $\left( M, \bigvee, \wedge,1\right)$ is a frame, and
		\item $(M, +,0)$ is a commutative monoid such that $a+(b \wedge c)=(a+b)\wedge (a+c)$ 
	\end{enumerate}
	When $D=\2$, $(M,\cdot,* )$ is just a quantale and the $\2$-laminated MV-frames are called simply \emph{MV-frames}.
\end{definition}

Homomorphisms of $D$-frames are defined naturally as homomorphisms of the underlying $D$-algebras which preserve finite meets and the operation $+$. That is, $\varphi: M \to N$ is a $D$-frame homomorphism if
\begin{enumerate}[(i)]
	\item $\varphi(1_{M})=1_{N}$,
	\item $\varphi(\bigvee_{i \in I} \alpha_{i})=\bigvee_{i \in I} \varphi(\alpha_{i})$,  for any family $\{\alpha_i\}_{i \in I} \subseteq M$,
	\item $\varphi(r *^{M}\alpha)= r *^{N} \varphi(\alpha)$ for all $r \in D$, $\alpha \in M$
\end{enumerate}
and, for all $\alpha, \beta \in M$
\begin{enumerate}[(i)]
	\setcounter{enumi}{3}
	\item $\varphi(\alpha \cdot^{M} \beta)=\varphi(\alpha)\cdot^{N}\varphi(\beta)$,	
	\item $\varphi(\alpha +^{M} \beta)= \varphi(\alpha)+^{N} \varphi(\beta)$, 
	\item $\varphi(\alpha \wedge \beta)= \varphi(\alpha) \wedge \varphi(\beta)$ 
\end{enumerate}
For any subquantale $D$ of $[0,1]$ we shall denote by $\MVfm(D)$ the category of $D$-frames and their homomorphisms. Moreover, for $D= [0,1]$ and $D=\2$ we shall use $\LMVfm$ and $\MVfm$, respectively. The dual category of $\MVfm(D)$ will be denoted by $\MVLc(D)$, and we shall usually refer to its objects as \emph{$D$-laminated MV-locales} (\emph{$D$-locales} for short).

The equivalence between the categories of quantale algebras and quantales proved in \cite{solovyov2016}, allows to interpret both the L-fuzzy locales of D.-X. Zhang and Y.-M. Liu \cite{D.zhangfuzzystone} and the L-frames of W. Yao, as particular cases within quantale algebra theory.

\section{Adjunction \texorpdfstring{$\Omega \dashv \pt$} {Omega  pt}}\label{adjsec}

In this section we shall present the adjunction between $D$-laminated MV-spaces and $D$-frames. Then we will discuss sobriety and spatiality, thus restricting the adjunction in order to obtain a duality.

If $(X, \tau_{X})$ and $(Y, \tau_{Y})$ are  $D$-laminated MV-spaces and $f: X \to Y $ is a continuous map, then $\tau_{X}$ and $\tau_{Y}$ are  $D$-frames, and $f$ satisfies the following, for $x \in X$, $\alpha , \beta \in \tau_{Y}$ and $r \in [0,1]$:
\begin{itemize}
    \item $f^{\cou}(\n{1}_{Y})(x)=\n{1}_{Y}(f(x))=1$,
    \item $f^{\cou}\left( \bigvee_{i \in I} \alpha_{i}\right)(x)=\left( \bigvee_{i \in I} \alpha_{i}\right)(f(x))=\bigvee_{i \in I} f^{\cou}(\alpha_{i})(x)=\left( \bigvee_{i \in I} f^{\cou}(\alpha_{i})\right)(x)$,
    \item $f^{\cou}(\alpha \circledast \beta)(x)=(\alpha \circledast \beta)(f(x))=\alpha(f(x)) \circledast \beta (f(x))=(f^{\cou}(\alpha)\circledast f^{\cou}(\beta))(x)$ for $\circledast \in \{ \wedge, \odot, \oplus\}$,
    \item $f^{\cou}(r \odot \alpha)(x)=(r \odot \alpha)(f(x))=r \odot \alpha(f(x))=r \odot f^{\cou}(\alpha)(x)= (r \odot f^{\cou}(\alpha))(x)$.
\end{itemize}
That is, $f^{\cou}|_{\tau_{Y}}:  \tau_{Y} \to \tau_{X}$  is a $D$-frame homomorphism, and the correspondence
\begin{center}
	$X \mapsto \O(X)=\tau_{X}$, \ \ \ $f \mapsto \O(f)=f^{\cou}|_{\tau_{Y}}$
\end{center}
defines a  functor $$\O:\TMV(D) \to \MVfm (D)^{\op}. $$
On the other hand, consider the $D$-space $(\{\star\}, [0,1])$, where $\{ \star\}$ denotes the one-element set and the open sets are the maps of the form $\underline{r}(\star)=r$ for $r\in[0,1]$. Then, for any $D$-space $(X,\tau)$ and any $x\in X$, the map
\begin{align*}
\n{x}: &\{\star\} \to X\\
&\star \mapsto x ,
\end{align*}
is continuous. This is because for every $\alpha\in\tau$ we have that $(\alpha\circ f)(\star)=\alpha(x)= \underline{\alpha(x)}(\star)$, which is an open map in $[0,1]$.

 
Moreover
\begin{align*}
	\O(\n{x}): \O(X) & \to [0,1] \\
	\alpha & \mapsto \alpha(x)
\end{align*}
is a $D$-frame homomorphism for each $x \in X$. That is, for any point $x \in X$, there exists a $D$-frame homomorphism $\Omega(\n{x})$. As in the classical case, this motivates us to extend the notion of points in a $D$-space to the definition of points in a $D$-frame in the following way.
\begin{definition} Given a $D$-frame $M$. A \emph{point}\index{$D$-frame! point of a-} of $M$ is a morphism $p: M \to [0,1]$ in the category $\MVfm (D)$.
\end{definition}	
Given a $D$-frame $M$, $\pt{M}$ denote the set of all points of $M$ and, to define a functor from 
$\MVfm (D)^{\op}$ to $\TMV(D)$, we will equip the set $\pt{M}$ with a $D$-laminated MV-topology. 
\begin{proposition}
	Let $(M, \bigvee, \wedge,*,\cdot, +, 1_{M},0_{M})$ be a $D$-frame. The set $\widehat{M}=\{\widehat{a}: a \in M\}$, where \begin{align*}
	\widehat{a}: \pt M& \to [0,1]\\
	p &\mapsto p(a)
\end{align*} is a $D$-laminated MV-topology over $\pt M$.
\end{proposition}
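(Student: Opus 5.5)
The plan is to check the six conditions of Definition \ref{$D$-space} one by one for $\widehat{M}\subseteq[0,1]^{\pt M}$. Each check comes from a single principle: every point $p\colon M\to[0,1]$ is a $D$-frame homomorphism, so evaluation at $p$ turns each $D$-frame operation on $M$ into the corresponding pointwise operation on $[0,1]$. Concretely, the first step is to show that the assignment $a\mapsto\widehat{a}$ carries each operation of $M$ to the pointwise operation of $[0,1]^{\pt M}$. Once that is done, $\widehat{M}$ is automatically closed under all of them.

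The individual verifications run as follows.
\begin{enumerate}[(i)]
\item For the top element, $\widehat{1_M}(p)=p(1_M)=1$ for every $p$, so $\widehat{1_M}=\1$.
\item For joins, given $\{\widehat{a_i}\}_{i\in I}\subseteq\widehat{M}$ (an arbitrary subset of $\widehat{M}$ can always be indexed this way by choosing representatives), we have $\widehat{\bigvee_i a_i}(p)=p(\bigvee_i a_i)=\bigvee_i p(a_i)$. Hence $\bigvee_i\widehat{a_i}=\widehat{\bigvee_i a_i}\in\widehat{M}$; the empty family gives $\0=\widehat{0_M}$.
\item For scalars $r\in D$, we have $\widehat{r*a}(p)=p(r*a)=r\odot p(a)$, so $r\odot\widehat{a}=\widehat{r*a}$.
\item[(iv)--(vi)] In the same way, $\widehat{a}\odot\widehat{b}=\widehat{a\cdot b}$, $\widehat{a}\oplus\widehat{b}=\widehat{a+b}$ and $\widehat{a}\wedge\widehat{b}=\widehat{a\wedge b}$, using that $p$ preserves $\cdot$, $+$ and $\wedge$.
\end{enumerate}

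The only point needing care is the target. Each of these identities reads the operations of $[0,1]$ as the $D$-frame structure on $[0,1]$ with which points are defined: scalar multiplication is $r*x=r\odot x$, $\cdot=\odot$, $+=\oplus$, $\wedge=\min$, and joins are real suprema. This is exactly the structure that makes $\O(\n{x})$ a $D$-frame homomorphism into $[0,1]$ in the discussion preceding the definition of point, so I will state it explicitly at the outset. With that convention fixed, no step is a genuine obstacle, and the proof reduces to the routine computations above. A side benefit is that $a\mapsto\widehat{a}$ is a surjective $D$-frame homomorphism $M\to\widehat{M}$, which will be useful later for the counit of the adjunction.
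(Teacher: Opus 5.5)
Your proposal is correct and follows essentially the same route as the paper: both check the conditions of the definition of $D$-laminated MV-space directly. Each rests on the fact that every point $p$ is a $D$-frame homomorphism into $[0,1]$, which gives $\widehat{1_M}=\mathbf{1}$, $\bigvee_i\widehat{a_i}=\widehat{\bigvee_i a_i}$, $r\odot\widehat{a}=\widehat{r*a}$, and the analogous identities for $\odot$, $\oplus$ and $\wedge$ (you also state explicitly the $D$-frame structure on $[0,1]$, which the paper leaves implicit).
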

\begin{proof}
    Let $p \in \pt{M}$. We will verify that $\widehat{M
    }$ satisfies the conditions of Definition \ref{$D$-space}.
    \begin{enumerate}[i)]
    \item Since $1_{M} \in M$, $\widehat{1_{M}}(p)=p(1_{M})=1$, and therefore $\n{1}=\widehat{1_{M}} \in \widehat{M}$.
    \item For $ \{\hat{a_{i}}\}_{i \in I} $ a family of elements of $\widehat{M}$, $$\left(\bigvee_{i\in I}\hat{a_{i}}\right)(p)=\bigvee_{i \in I}p(a_{i})=p \left(\bigvee_{i \in I} a_{i} \right)=\widehat{\left(\bigvee_{i\in I}a_{i}\right)}(p).$$
    That is $\left(\bigvee_{i\in I}\hat{a_{i}}\right)=\widehat{\left(\bigvee_{i\in I}a_{i}\right)} \in \widehat{M}$.
    \item By the properties of morphisms in the category $\MVfm (D)$, 
    $$(r \odot \hat{a})(p)=r \odot \hat{a}(p)=r \odot p(a)=p(r *a)=\widehat{(r *a)}(p)$$
for $r\in D$ and $\hat{a} \in \widehat{M}$.
\item If $\hat{a_{1}}, \hat{a_{2}}\in \widehat{M}$,
$$(\hat{a_{1}} \oplus \hat{a_{2}})(p)=\hat{a_{1}}(p) \oplus \hat{a_{2}}(p)=p(a_{1})\oplus p(a_{2})=p(a_{1} +a_{2})=\widehat{(a_{1}+a_{2})}(p).$$ Thus, $\hat{a_{1}}\oplus \hat{a_{2}}= \widehat{a_{1}+a_{2}} \in \widehat{M}$ and similarly we obtain $\hat{a_{1}}\odot \hat{a_{2}}= \widehat{a_{1}\cdot a_{2}} \in \widehat{M}$ and $\hat{a_{1}}\wedge \hat{a_{2}}= \widehat{a_{1}\wedge a_{2}} \in \widehat{M}$.
    \end{enumerate}
\end{proof}
Further, if $q: L \to M$ is a homomorphism of $D$-frames, then 
\begin{align*}
	\pt (q): \pt M& \to \pt L\\
	p& \mapsto p \circ q
\end{align*}

is a continuous map between the $D$-laminated MV-spaces $(\pt M, \widehat{M})$ and $(\pt L, \widehat{L})$. In fact, if $\hat{b} \in \widehat{L}$ and $p \in \pt{M}$,
 $$   \pt{q}^{\cou}(\hat{b})(p)=(\hat{b} \circ \pt (q))(p)
    =\hat{b}(p \circ q)=(p \circ q)(b) =p(q(b))=\widehat{q(b)}(p).$$
So, $\pt{q}^{\cou}(\hat{b})=\widehat{q(b)} \in \widehat{M}$, and the assignment
\begin{center}
	$L \mapsto (ptL, \widehat{L})$, \  \ $q \mapsto pt(q)$
\end{center}
defines a functor $\pt: \MVfm (D)^{\op} \to \TMV(D) $. \par
In the following, we will work with the dual category $\MVfm (D)^{\op}$. However, to simplify notation and computations, instead of explicitly writing morphisms in the opposite category $\MVfm(D)^{\op}$ as $ f^{\op}: Y \to X $ (for a morphism $ f: X \to Y $ in $\MVfm (D)$), we will simply consider morphisms in $\MVfm (D)$ with their original orientation. That is, whenever we refer to a morphism $ f: X \to Y $, we understand that in the context of $\MVfm (D)^{\op}$, it is actually considered as $ f^{\op}: Y \to X $.

\begin{theorem}\label{adjuncion P-I}
	The functor $\Omega:\TMV(D) \rightarrow \MVfm (D)^{\op} $ is the left adjoint of $\pt: \MVfm(D)^{\op} \to \TMV(D)  $ with unit 
	\begin{align*}
		\eta_{X}: X &\to \pt \O(X)  \\ 
		x& \mapsto \O(\n{x})
	\end{align*}
	for each $D$-laminated MV-space $(X, \Omega(X))$, and counit $\e_{M} \in \hom_{\MVfm(D)^{\op}}(\O(\pt M), M)$ defined by
	\begin{align*}
		\e_{M}: M &\to \O(\pt M)  \\ 
		a& \mapsto \widehat{a}
	\end{align*}
	for each $D$-frame  $M$.
\end{theorem}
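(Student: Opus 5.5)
The plan is to prove the adjunction through the universal property of the unit: for every $D$-laminated MV-space $X$, every $D$-frame $M$ and every continuous map $f: X \to \pt M$, we show there is a unique $D$-frame homomorphism $\varphi: M \to \O(X)$ (a morphism $\O(X)\to M$ in $\MVfm(D)^{\op}$) such that $\pt(\varphi)\circ \eta_X = f$. Equivalently, we exhibit a bijection
\[
\hom_{\MVfm(D)}(M,\O(X)) \cong \hom_{\TMV(D)}(X,\pt M),
\]
natural in $X$ and $M$. Afterwards we check that the stated $\e_M$ is the counit and that the triangle identities hold.

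The first step is to check that $\eta_X$ and $\e_M$ are morphisms of the correct categories.

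\begin{itemize}
\item We already know that $\O(\n{x})$ is a point of $\O(X)$, so $\eta_X$ is well defined. It is continuous: every open of $\pt\O(X)$ has the form $\widehat{\alpha}$ with $\alpha\in\O(X)$, and $\widehat{\alpha}(\eta_X(x)) = \O(\n{x})(\alpha) = \alpha(x)$. Hence $\eta_X^{\cou}(\widehat{\alpha}) = \alpha \in \O(X)$.
\item The map $\e_M: a \mapsto \widehat a$ is a $D$-frame homomorphism by the identities established in the proof of the preceding proposition: $\widehat{\bigvee a_i}=\bigvee\widehat{a_i}$, $\widehat{r*a}=r\odot\widehat a$, $\widehat{a+b}=\widehat a\oplus\widehat b$, $\widehat{a\cdot b}=\widehat a\odot\widehat b$, $\widehat{a\wedge b}=\widehat a\wedge\widehat b$, and $\widehat{1_M}=\n 1$.
\end{itemize}

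Naturality of both transformations is a direct unwinding of definitions. For $g:X\to Y$ continuous, $\pt\O(g)(\eta_X(x)) = \O(\n{x})\circ g^{\cou} = \O(\n{g(x)}) = \eta_Y(g(x))$. For $q:L\to M$, we need $\O(\pt q)(\widehat b) = \widehat{q(b)}$, which was computed just before the theorem.

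Next, the bijection. Given $\varphi: M\to\O(X)$, set $\Phi(\varphi)=\pt(\varphi)\circ\eta_X$, so that $\Phi(\varphi)(x)=\O(\n x)\circ\varphi$, i.e.\ $a\mapsto\varphi(a)(x)$. Conversely, given a continuous $f:X\to\pt M$, set $\Psi(f)=\O(f)\circ\e_M$, so that $\Psi(f)(a)=\widehat a\circ f$, i.e.\ $x\mapsto f(x)(a)$. This lands in $\O(X)$ by continuity of $f$, and it is a $D$-frame homomorphism as a composite of two such homomorphisms. The two constructions are mutually inverse because both send the data to the same two-variable function $(x,a)\mapsto f(x)(a)=\varphi(a)(x)$; this also gives uniqueness in the universal property.

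The triangle identities reduce to the same pointwise identities:
\[
\O(\eta_X)\circ\e_{\O(X)} = \id_{\O(X)}, \qquad \pt(\e_M)\circ\eta_{\pt M}=\id_{\pt M}.
\]
For the first, $\alpha\mapsto\widehat\alpha\circ\eta_X = \alpha$. For the second, $p\mapsto \O(\n p)\circ\e_M = (a\mapsto\widehat a(p)=p(a)) = p$.

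The main obstacle is bookkeeping rather than mathematics. We must keep the orientation in $\MVfm(D)^{\op}$ straight, so that $\e_M$, written as a map $M\to\O(\pt M)$, really is the counit $\O\pt M\to M$ of $\O\dashv\pt$. We must also confirm that each morphism involved preserves all six pieces of structure. The scalar action $r*a$ is the one place to be careful: points preserve it only for $r\in D$, and axiom (iii) of the $D$-laminated MV-space definition only requires closure under $r\odot o$ for $r\in D$, so the two conditions match.
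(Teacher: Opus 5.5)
Your proposal is correct and is essentially the paper's argument: the paper's proof consists only of the two pointwise triangle identities $\Omega(\eta_X)\circ\varepsilon_{\Omega(X)}=\id_{\Omega(X)}$ and $\pt(\varepsilon_M)\circ\eta_{\pt M}=\id_{\pt M}$, computed exactly as you do at the end. Your additional checks (continuity of $\eta_X$, that $\varepsilon_M$ is a $D$-frame homomorphism, naturality of $\eta$ and $\varepsilon$) fill in details the paper leaves implicit, and the explicit hom-set bijection is redundant given the triangle identities and naturality, so none of this changes the route.
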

\begin{proof}
	We shall prove that the triangles
	\begin{multicols}{2}
		$$\xymatrix{
			&		& \Omega(X) \ar[dd]^{\e_{\O(X)}} \ar[lldd]_{\id_{\O(X)}}\\
			& & \\
			\O(X) & & \O (\pt (\O(X)))\ar[ll]^{\O(\eta_{X})}	\\
		}$$
		$$\xymatrix{
			\pt M\ar[rr]^{\eta_{\pt M}}  \ar[rrdd]_{\id_{\pt M}}
			& & \pt \O(\pt M)\ar[dd]^{\pt (\e_{M})}\\
			& &\\
			&		& \pt M  
		}$$
	\end{multicols}	
	
	commute for all $X \in \TMV (D)$ and $ M \in \MVfm (D)$, that is $\O(\eta_{X})\circ \e_{\O(X)}= \id_{\O(X)}$ and $\pt(\e_{M})\circ \eta_{\pt M}=\id_{\pt M}$.
    
	Let $\alpha \in \O(X)$, $p \in \pt M $ and $x \in X$. Note that $\O (\eta_{X})=\eta_{X}\cou|_{\widehat{\O(X)}} :\widehat{\O(X)}\to \O(X)$ and 
	\begin{align*}
		\eta_{X}\cou(\widehat{\alpha})(x)=&\widehat{\alpha}\circ \eta_{X}(x)\\
		=&\widehat{\alpha}(\eta_{X}(x))\\
		=&\eta_{X}(x)(\alpha)\\
		=&\alpha(x).
	\end{align*}
	So $\O(\eta_{X})\circ \e_{\O(X)}(\alpha)=\O(\eta_{X})(\widehat{\alpha})=\alpha$.
    
	On the other hand,  
	$pt(\e_{M})\circ \eta_{\pt M}(p)= \pt(\e_{M})(\eta_{\pt M}(p))=\eta_{\pt M}(p)\circ \e_{M} \in \pt M$ and
	\begin{align*}
		\eta_{\pt M}(p)\circ \e_{M}(a)=& \eta_{\pt M}(p)(\widehat{a})\\
		=&\widehat{a}(p)\\
		=&p(a)
	\end{align*}
	for all $a \in M$. Then $pt(\e_{M})\circ \eta_{\pt M}(p)=\eta_{\pt M}(p) \circ \e_{M}=p$. 
\end{proof}
The previous adjunction leads to the notion of sobriety in $D$-laminated MV-spaces and that of spatiality in $D$-frames.
\begin{definition}\label{sober}
X is \emph{sober} \index{$D$-laminated MV-space! Sober} if for each $p \in \pt \O(X)$ there is a unique $x \in X$ such that $p= \O(\n{x})$, i.e. $p(\alpha)= \alpha(x)$ for all $\alpha \in  \O(X)$.
\end{definition}
The subcategory of sober $D$-laminated MV-spaces is denoted by $\MVSob(D)$.
\begin{lemma}
Let $(X, \O(X))$ be a $D$-laminated MV-space. The following statements are equivalent:
\begin{enumerate}[(a)]
    \item $(X, \O(X))$ is sober;
    \item  $\eta_{X}$ is bijective;
    \item  $\eta_{X}$ is an homeomorphism.
\end{enumerate}
\end{lemma}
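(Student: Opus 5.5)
The plan is to prove (a)$\Leftrightarrow$(b), then (c)$\Rightarrow$(b) (which is trivial), and finally (b)$\Rightarrow$(c).

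For (a)$\Leftrightarrow$(b), I will just unwind Definition~\ref{sober}. By definition $\eta_X(x)=\O(\n{x})$, the point $\alpha\mapsto\alpha(x)$ of $\O(X)$. Sobriety says that every $p\in\pt\O(X)$ equals $\O(\n{x})$ for exactly one $x$. Existence of such an $x$ is surjectivity of $\eta_X$, and uniqueness is injectivity. So (a) and (b) are literally the same statement.

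For (b)$\Rightarrow$(c), I need that $\eta_X$ is continuous and that its inverse is continuous, i.e.\ that $\eta_X$ induces a bijection between the two topologies.
\begin{itemize}
\item Continuity of $\eta_X$ is already implicit in Theorem~\ref{adjuncion P-I}, since $\eta_X$ is a morphism of $\TMV(D)$. It can also be checked directly. The open sets of $\pt\O(X)$ are the maps $\widehat{\alpha}$ with $\alpha\in\O(X)$, and the computation in that proof gives $\eta_X^{\cou}(\widehat{\alpha})=\widehat{\alpha}\circ\eta_X=\alpha$, which is open.
\item For the inverse $\eta_X^{-1}$, take an open $\alpha\in\O(X)$. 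Its preimage under $\eta_X^{-1}$ is $\alpha\circ\eta_X^{-1}$. Using $\alpha=\widehat{\alpha}\circ\eta_X$, this equals $\widehat{\alpha}\circ\eta_X\circ\eta_X^{-1}=\widehat{\alpha}$, which is open in $(\pt\O(X),\widehat{\O(X)})$. So $\eta_X^{-1}$ is continuous and $\eta_X$ is a homeomorphism.
\end{itemize}

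The only point requiring care is the identity $\widehat{\alpha}\circ\eta_X=\alpha$, used in both bullets. Once written out it is immediate, since $\widehat{\alpha}(\eta_X(x))=\eta_X(x)(\alpha)=\alpha(x)$. I do not expect any genuine obstacle in this lemma.
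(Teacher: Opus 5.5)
Your proof is correct. For (a)$\Leftrightarrow$(b) you do exactly what the paper does. For (b)$\Leftrightarrow$(c), however, the paper only invokes ``the fact that sober spaces are fixed points of the adjunction'', whereas you verify directly, via the identity $\widehat{\alpha}\circ\eta_X=\alpha$, that a bijective $\eta_X$ has a continuous inverse.

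The two routes differ mainly in how much is made explicit. Being a fixed point of the adjunction means precisely that the unit is an isomorphism in $\TMV(D)$. The paper's appeal therefore presupposes the implication (b)$\Rightarrow$(c) rather than proving it, and in $\TMV(D)$ a continuous bijection need not be a homeomorphism.

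Your computation supplies the missing ingredient. It is the first triangle identity $\O(\eta_X)\circ\e_{\O(X)}=\id_{\O(X)}$ from Theorem~\ref{adjuncion P-I}, which shows that $\eta_X^{\cou}$ maps the topology $\widehat{\O(X)}$ bijectively onto $\O(X)$. In other words, $X$ always carries the topology induced by $\eta_X$, and this initiality is exactly what turns a continuous bijection into a homeomorphism. Your argument is thus self-contained and fills the step the paper leaves implicit, at the cost of only a two-line computation.
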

\begin{proof} The equivalence $(a)\Leftrightarrow(b)$ follows readily from the definition of $\eta_{X}$, and other one is an immediate consequence of the fact that sober spaces are fixed points of the adjunction.
\end{proof}
The following result characterizes $T_{0}$ $D$-laminated MV-spaces in terms of the adjunction of Theorem \ref{adjuncion P-I}.

\begin{lemma}\label{etainj}
Let $(X, \O(X))$ be a $D$-laminated MV-space. $(X,\tau)$ is $T_{0}$ if and only if $\eta_{X}$ is injective.
\end{lemma}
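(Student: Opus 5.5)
The plan is to unfold the definition of $\eta_X$. Recall that $\eta_X(x)=\O(\n{x})$ is the point $\alpha\mapsto\alpha(x)$ of $\O(X)$. Two points of a $D$-frame are equal exactly when they are equal as maps $\O(X)\to[0,1]$. Hence, for $x,y\in X$,
$$\eta_X(x)=\eta_X(y)\iff \alpha(x)=\alpha(y)\ \text{ for all }\alpha\in\O(X).$$
Everything follows from this single equivalence, so I would state it first and then prove the two implications separately.

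Assume first that $(X,\tau)$ is $T_0$, where $\tau=\O(X)$. Take $x\neq y$. By the $T_0$ property there is $\alpha\in\tau$ with $\alpha(x)\neq\alpha(y)$. Then $\eta_X(x)(\alpha)=\alpha(x)\neq\alpha(y)=\eta_X(y)(\alpha)$, so $\eta_X(x)\neq\eta_X(y)$, and $\eta_X$ is injective.

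Conversely, assume $\eta_X$ is injective and take $x\neq y$. Then $\eta_X(x)\neq\eta_X(y)$. These are functions on $\O(X)$, so they differ at some $\alpha\in\O(X)$, which gives $\alpha(x)\neq\alpha(y)$. This is exactly the $T_0$ condition.

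I do not expect a real obstacle here. The only point to be careful about is that equality in $\pt\O(X)$ is equality of functions. This holds because points are simply morphisms $\O(X)\to[0,1]$, with no quotient involved. It is also worth noting that $\eta_X(x)$ really is a point, but this was already established before Theorem \ref{adjuncion P-I}.
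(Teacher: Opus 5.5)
Your proof is correct and follows the paper's argument: both rest on the single observation that $\eta_X(x)=\O(\n{x})$ and $\eta_X(y)$ are evaluation maps on $\O(X)$, so they differ exactly when some open $\alpha$ satisfies $\alpha(x)\neq\alpha(y)$. That is precisely the $T_0$ condition. You simply spell out the two implications that the paper states in one line.
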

\begin{proof}
Since $\eta_{X}(x)=\O(\n{x})$, for $x \neq y$ in $X$,  $\eta_{X}(x) \neq \eta_{X}(y)$ if and only if there exists $\alpha \in \O(X)$ such that $\alpha(x)\neq \alpha(y)$.
\end{proof}
As a consequence of the previous lemma, we have also that every sober $D$-lami\-nated MV-space is $T_{0}$.

\begin{exm}
	The MV-topological space $(X, [0,1]^{X})$ is obviously laminated and Hausdorff, hence $T_0$ too. Then $\eta_X$ is injective by Lemma \ref{etainj}. We will now show that $\eta_{X}$ is surjective too.
    
	Let be $p \in \pt \O(X)$ and $\alpha_{0}=\bigvee p^{-1}[0] \in \O(X)$. Then $p(\alpha_{0})=0$ and $\alpha_{0}^{-1}[0] \neq \emptyset$. In fact, if $\alpha_{0}^{-1}[0]= \emptyset$, $\alpha_{0}(y)\neq 0$ for all $y \in X$ and since  $\alpha_{0}= \bigvee\limits_{y \in X}(\alpha_{0}\wedge \chi_{\{y\}})= $ we have that
	$$0= p(\alpha_{0})= \bigvee\limits_{y \in X} (p(\alpha_{0})\wedge p(\chi_{y}))= \bigvee\limits_{y \in X} p(\chi_{\{y\}}).$$ 
	So, $p(\chi_{\{y\}})=0$ for all $y \in X$  and we obtain the contradiction $p(\n{1})=\bigvee\limits_{y \in X}  p(\chi_{\{y\}})=0$. Therefore $\alpha_{0}(x)=0$ for some $x \in X$ and we will show that $\alpha_{0}= \chi_{X-\{x_{0}\}}$ for a unique $x_{0}\in X$. In fact, if $\alpha_{0}(y_{1})=\alpha_{0}(y_{2})=0$ for $y_{1} \neq y_{2} \in X$, then  $$0=p(\n{0})=p(\chi_{\{y_{1}\}}\wedge \chi_{\{y_{2}\}})=p(\chi_{\{y_{1}\}})\wedge p(\chi_{\{y_{2}\}})$$
	implies that $\chi_{\{y_{1}\}}\leq \alpha_{0}$ or $\chi_{\{y_{2}\}}\leq \alpha_{0}$, and this is not possible because $\chi_{\{y_{i}\}}(y_{i})=1 \nleq0= \alpha_{0}(y_{i})$. So, $\alpha_{0}^{-1}[0]=\{x_{0}\}$ for a unique $x_{0}$ and  since $$0= p(\n{0})=p(\chi_{\{x_{0}\}}\wedge \chi_{X-\{x_{0}\}})=p(\chi_{\{
		x_{0}\}}) \wedge p(\chi_{X-\{x_{0}\}})$$ and $\chi_{\{x_{0}\}}\neq \n{0}$, we have that $p(\chi_{X-\{x_{0}\}})=0$. Consequently $\alpha_{0}=\chi_{X-\{x_{0}\}}$ and $p(\beta)=0$ if and only if $\beta(x_{0})=0$. In particular $p(\chi_{\{y\}})=0$ for all $y \in X$ such that $y\neq x_{0}$. Moreover
	$$1=p(\n{1})=p(\chi_{X-\{x_{0}\}} \oplus \chi_{\{x_{0}\}})= p(\chi_{X-\{x_{0}\}}) \oplus p(\chi_{\{x_{0}\}})=p(\chi_{\{x_{0}\}})$$ and for each $\beta \in [0,1]^{X}$ we have
	\begin{align*}
		p(\beta)=&p\left(\bigvee_{y\in X}\beta(y) \odot \chi_{\{y\}}\right)\\
                =&  \bigvee_{y\in X}\beta\left(y\right)\odot p\left(\chi_{\{y\}}\right)\\
                =&  \beta\left(x_{0}\right) \odot p\left(\chi_{\{x_{0}\}}\right)\\
                =& \beta\left(x_{0}\right).	
\end{align*}
	Therefore $p= \eta_{X}\left(x_{0}\right)$ and $\left(X, [0,1]^{X}\right)$ is sober.
\end{exm}
\begin{exm}\label{ex finito}Let $X=\{x,y,z\}$ and the map $\rho \in [0,1]^{X}$ be defined by
		\begin{center}
			$\rho(u)= \left\{ \begin{array}{lll}
				0.5 &  \mbox{ iff }  u=x  \\
				0.6  & \mbox{ iff }  u=y \\
				0.6  & \mbox{ iff }  u=z
			\end{array}
			\right.$
		\end{center}
	Then $\tau=\{\0, \mathbf{1} , \rho, (\rho \odot \rho), 2(\rho \odot \rho), 3(\rho \odot \rho),4(\rho \odot \rho),5(\rho \odot \rho),(\rho \odot \rho) \oplus \rho, 2(\rho \odot \rho) \oplus \rho  \}$ is an MV-topology on $X$, where $\beta(y)= \beta (z)$, for each $\beta \in \tau$. Consequently $(X, \tau)$ is not $T_{0}$ and, therefore, neither a Hausdorff nor a sober space.
\end{exm}
\begin{lemma}\label{equiv 1}
 For each $D$-frame $L$ the $D$-laminated MV-space $(\pt L, \widehat{L})$ is sober. 
 \end{lemma}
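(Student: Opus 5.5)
We need to show that for each $q \in \pt \O(\pt L) = \pt \widehat{L}$ there is a unique $p \in \pt L$ with $q = \O(\n{p})$, i.e. $q(\widehat{a}) = \widehat{a}(p) = p(a)$ for all $a \in L$. Equivalently, by the lemma characterizing sobriety, we must show that $\eta_{\pt L}$ is bijective. The natural candidate for the preimage of $q$ is $p := q \circ \e_L$, where $\e_L : L \to \widehat{L}$, $a \mapsto \widehat{a}$, is the counit of Theorem \ref{adjuncion P-I}.

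\textbf{Existence.} First we check that $\e_L$ is a $D$-frame homomorphism onto $\widehat{L}$. The computations in the proof that $\widehat{L}$ is a $D$-laminated MV-topology already give $\widehat{1_L}=\1$, $\widehat{\bigvee a_i}=\bigvee \widehat{a_i}$, $\widehat{r*a}=r\odot\widehat{a}$, $\widehat{a+b}=\widehat a\oplus\widehat b$, $\widehat{a\cdot b}=\widehat a\odot\widehat b$ and $\widehat{a\wedge b}=\widehat a\wedge\widehat b$. Hence $p=q\circ\e_L$ is a composite of $D$-frame homomorphisms with codomain $[0,1]$, so $p\in\pt L$. Then, for every $\widehat{a}\in\widehat L$,
\[
\O(\n{p})(\widehat a)=\widehat a(p)=p(a)=q(\widehat a),
\]
so $q=\eta_{\pt L}(p)$. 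This is exactly the second triangle identity $\pt(\e_L)\circ\eta_{\pt L}=\id_{\pt L}$ read in the other direction: $\eta_{\pt L}\circ\pt(\e_L)=\id$ on $\pt\O(\pt L)$.

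\textbf{Uniqueness.} By Lemma \ref{etainj} it suffices to show that $(\pt L,\widehat L)$ is $T_0$. If $p\neq p'$ in $\pt L$, then $p(a)\neq p'(a)$ for some $a\in L$, so $\widehat a(p)\neq\widehat a(p')$ with $\widehat a$ open. Hence $\eta_{\pt L}$ is injective, and together with existence it is bijective, so $(\pt L,\widehat L)$ is sober.

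The only step needing care is the well-definedness issue hidden in the existence part. One must use that $q$ is defined on $\widehat L$ as a set of functions, so $p(a)=q(\widehat a)$ depends only on $\widehat a$. Both are automatic here because $\e_L$ is a genuine map $L\to\widehat L$, so no obstacle arises. Everything else is routine unwinding of the definitions.
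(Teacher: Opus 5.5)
Your proof is correct and follows the paper's argument. Injectivity of $\eta_{\pt L}$ comes from separating distinct points by some $\widehat a$, and surjectivity comes from taking $p=q\circ\e_L$, which is the paper's $p(a)=\mu(\widehat a)$; you also justify that this $p$ is a genuine point because $\e_L$ is a $D$-frame homomorphism, which the paper leaves implicit. One small inaccuracy: $\eta_{\pt L}\circ\pt(\e_L)=\id$ is not the triangle identity $\pt(\e_L)\circ\eta_{\pt L}=\id_{\pt L}$ ``read in the other direction'' and does not follow from the adjunction. It is exactly what your direct computation establishes.
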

\begin{proof}We will prove that $\eta_{\pt L}$ is bijective, where
	\begin{align*}
	\eta_{\pt L}: \pt L& \rightarrow \pt(\widehat{L})\\
		p& \mapsto \O(p)
	\end{align*} 
	 and $\O(p): \hat{a} \in \widehat{L} \mapsto \hat{a}(p)=p(a) \in [0,1]$.	If $p \neq q$, then there is $a \in L$ such that $p(a)\neq q(a)$. Thus, $\O(p)(\hat{a})=\hat{a}(p) \neq \hat{a}(q)=\O(q)(\hat{a})$. Hence $\eta_{\pt L}(p)=\O(p) \neq \O(q)=\eta_{\pt L}(q)$  and  $\eta_{\pt L}$ is injective.

To show that $\eta_{\pt L}$ is surjective, let us consider $\mu \in \pt\O(\pt L)$, and note that there is a point  $p \in \pt L$, defined by $p(a)=\mu(\hat{a})$ for all $a \in L$, such that $\mu(\hat{a})=\hat{a}(p)=\eta_{\pt L}(p)(\hat{a})$ for all $\hat{a} \in \O(\pt L)$. Therefore, $\mu=\eta_{\pt L}(p)$ and $\eta_{\pt L}$ is surjective.
\end{proof}

\begin{definition}\label{spatial}
	A $D$-locale  $L$ is called \emph{spatial} if it is isomorphic to $\O(X)$ for some $D$-laminated MV-space $(X, \O (X))$
    \end{definition}
    The subcategory of spatial $D$-locales is denoted by $\MVslc (D)$
	
\begin{lemma}\label{spatial-equiv}
 Let $L$ be a $D$-locale, the following statements are equivalent.
 \begin{enumerate}[(a)]
 \item $L$ is spatial
 \item $\e_{L}$ is an isomorphism.
 \item $\e_{L}$ is injective.
 \item  for all $a, b \in L$, $a\neq b$ implies that there is a point $p \in \pt L $ such that $p(a) \neq p(b)$.
 \end{enumerate}
 \end{lemma}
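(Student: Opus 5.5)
I would prove the cycle of implications (b)$\Rightarrow$(a)$\Rightarrow$(c)$\Rightarrow$(b), together with the equivalence (c)$\Leftrightarrow$(d).

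\emph{(b)$\Rightarrow$(a).} If $\e_L$ is an isomorphism, then $L\cong \O(\pt L)$. Since $(\pt L,\widehat{L})$ is a $D$-laminated MV-space, $L$ is spatial by Definition \ref{spatial}.

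\emph{(c)$\Leftrightarrow$(d).} This is a direct unwinding of definitions. We have $\e_L(a)=\widehat a$ and $\widehat a(p)=p(a)$, so $\e_L(a)\neq\e_L(b)$ exactly when some $p\in\pt L$ has $p(a)\neq p(b)$. Injectivity of $\e_L$ is therefore literally condition (d).

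\emph{(c)$\Rightarrow$(b).} The proof of the proposition preceding Theorem \ref{adjuncion P-I} shows that $a\mapsto\widehat a$ preserves $1$, arbitrary joins, scalar multiplication, $\cdot$, $+$ and $\wedge$. Hence $\e_L$ is a $D$-frame homomorphism, and it is surjective onto $\widehat L=\O(\pt L)$ by definition. If $\e_L$ is also injective, it is a bijective homomorphism. The inverse of a bijection preserving all these operations preserves them as well, so $\e_L$ is an isomorphism in $\MVfm(D)$, and hence also in its opposite category.

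\emph{(a)$\Rightarrow$(c).} This is the step needing the most care. Suppose $\varphi: L\to\O(X)$ is an isomorphism of $D$-frames for some $D$-laminated MV-space $X$. Take $a\neq b$ in $L$; then $\varphi(a)\neq\varphi(b)$ as functions $X\to[0,1]$, so there is $x\in X$ with $\varphi(a)(x)\neq\varphi(b)(x)$. As observed before Theorem \ref{adjuncion P-I}, $\O(\n{x}):\alpha\mapsto\alpha(x)$ is a $D$-frame homomorphism $\O(X)\to[0,1]$. Therefore $p=\O(\n{x})\circ\varphi$ is a point of $L$, and it satisfies $p(a)\neq p(b)$. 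This establishes (d), and hence (c).

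The main obstacle is to make sure in (a)$\Rightarrow$(c) that the composite $p$ is genuinely a morphism into $[0,1]$ in $\MVfm(D)$. This holds because both $\varphi$ and $\O(\n{x})$ are morphisms of $\MVfm(D)$. One must also keep track of the direction of arrows between the locale category $\MVLc(D)$ and the frame category $\MVfm(D)$; an isomorphism is invertible, so the orientation causes no problem.
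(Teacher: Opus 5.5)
Your proof is correct. For (b)$\Leftrightarrow$(c) and (c)$\Leftrightarrow$(d) you follow the paper: $\e_L$ is surjective onto $\widehat L$, and $\widehat a(p)=p(a)$. You also spell out two things the paper leaves implicit. First, $\e_L$ is a $D$-frame homomorphism, by the computations in the proposition showing $\widehat M$ is a topology. Second, the inverse of a bijective homomorphism is again a homomorphism.

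The real difference is in (a)$\Leftrightarrow$(b). The paper dispatches it by appealing to the fact that (spatial) $D$-locales are fixed points of the adjunction. Made precise, that would go in two steps:
\begin{itemize}
\item The triangle identity $\O(\eta_X)\circ\e_{\O(X)}=\id_{\O(X)}$ from Theorem \ref{adjuncion P-I} shows that $\e_{\O(X)}$ is split injective, hence an isomorphism.
\item Naturality of $\e$, namely $\e_{\O(X)}\circ\varphi=\O\pt(\varphi)\circ\e_L$, transports this along $\varphi:L\cong\O(X)$.
\end{itemize}
You instead observe that (b)$\Rightarrow$(a) is immediate from $L\cong\O(\pt L)$. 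You then prove (a)$\Rightarrow$(c) by hand: pulling back the evaluation homomorphisms $\O(\n{x})$ along $\varphi$ gives enough points to separate elements of $L$. This is the argument of Lemma \ref{equiv 2} transported along the isomorphism.

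Your route is elementary and self-contained. It needs neither the triangle identities nor naturality of $\e$, which the paper does not verify explicitly. The paper's route is shorter and places the lemma within the general fixed-point picture of an adjunction.
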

\begin{proof}   
 Since $\e_{L}$ is a  surjective morphism for all $D$-locale $L$, then $\e_{L}$ is an isomorphism if and only if  it is injective. That is, the equivalence $(b) \Leftrightarrow (c)$ holds. The equivalence $(c) \Leftrightarrow (d)$ follows readily from the definition of $\e$. Last, that $(a) \Leftrightarrow (b)$ follows from the fact that $D$-locales are fixed points of the adjunction. 
\end{proof}

\begin{lemma}\label{equiv 2}
 If $(X, \O(X))$ is a $D$-laminated  MV-space, then $\O(X)$ is a spatial $D$-frame.
 \end{lemma}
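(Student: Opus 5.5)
Spatiality of $\O(X)$ will be established through criterion (d) of Lemma \ref{spatial-equiv}: for any two distinct opens in $\O(X)$ we exhibit a point of $\O(X)$ that tells them apart. Once (d) holds, the lemma gives that $\e_{\O(X)}$ is injective, hence an isomorphism, so $\O(X)$ is spatial.

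Let $\alpha\neq\beta$ in $\O(X)$. Both are functions $X\to[0,1]$, so they differ at some argument: there is $x\in X$ with $\alpha(x)\neq\beta(x)$. For the separating point we take $\O(\n{x})=\eta_X(x)$, that is, the map $\gamma\mapsto\gamma(x)$.

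It remains to confirm that $\O(\n{x})$ is a point in the sense defined above, i.e. a $D$-frame homomorphism $\O(X)\to[0,1]$. This was already noted in the discussion before Theorem \ref{adjuncion P-I}: the map $\n{x}\colon\{\star\}\to X$ is continuous, and the functoriality of $\O$ turns it into a $D$-frame homomorphism into $\O(\{\star\})=[0,1]$. One can also check it directly, since the operations $\bigvee$, $\wedge$, $\odot$, $\oplus$, $\1$ and $r\odot-$ on opens are computed pointwise, so evaluation at $x$ preserves all of them. Then
\[
\O(\n{x})(\alpha)=\alpha(x)\neq\beta(x)=\O(\n{x})(\beta),
\]
so condition (d) holds.

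There is no real obstacle. The only point needing care is that Lemma \ref{spatial-equiv} is stated for $D$-locales, whereas the statement concerns the $D$-frame $\O(X)$. This is the same object viewed in $\MVfm(D)^{\op}$, so the lemma applies directly. Alternatively, spatiality follows straight from Definition \ref{spatial}, since $\O(X)$ is trivially isomorphic to $\O(X)$ for the space $(X,\O(X))$. The separation argument above is what confirms that $\e_{\O(X)}$ is in fact an isomorphism.
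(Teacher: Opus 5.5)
Your proof is correct and follows the paper's own argument: you verify condition (d) of Lemma \ref{spatial-equiv} by choosing $x\in X$ with $\alpha(x)\neq\beta(x)$ and separating $\alpha$ and $\beta$ with the point $\O(\n{x})$. Your two additions are both accurate: the check that $\O(\n{x})$ really is a $D$-frame homomorphism into $[0,1]$, and the remark that spatiality already follows trivially from Definition \ref{spatial}.
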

\begin{proof}
	To show that $\O(X)$ is spatial, we verify that condition (d) of Lemma \ref{spatial-equiv} holds.
 If  $\alpha \neq \beta \in \O(X)$, then there exists $x \in X$  such that $\alpha(x) \neq \beta(x)$. So $\O(x) \in \pt \O(X)$ satisfies $\O(\n{x})(\alpha)=\alpha(x) \neq \beta(x)=\O(\n{x})(\beta)$.
 \end{proof}
 
As a consequence of Lemmas \ref{equiv 1} and \ref{equiv 2}, and Theorem \ref{adjuncion P-I}, we obtain the following result.
\begin{corollary}\label{dualityth}
	There is an equivalence between the subcategory $\MVSob (D)$ of sober $D$-laminated MV-spaces and the subcategory $\MVslc(D)$ of spatial $D$-frames.    
	\end{corollary}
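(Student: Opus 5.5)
The plan is to use the standard fact that an adjunction restricts to an equivalence between the full subcategories on which the unit and counit are isomorphisms. By Theorem \ref{adjuncion P-I} we have $\O \dashv \pt$ with unit $\eta$ and counit $\e$. We set $\mathcal{A}$ to be the full subcategory of $\TMV(D)$ of objects $X$ with $\eta_X$ an isomorphism (a homeomorphism). We set $\mathcal{B}$ to be the full subcategory of $\MVfm(D)^{\op}$ of objects $M$ with $\e_M$ an isomorphism. The restricted functors $\O|_{\mathcal{A}}$ and $\pt|_{\mathcal{B}}$, together with the restricted unit and counit, then form an adjoint equivalence between $\mathcal{A}$ and $\mathcal{B}$. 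What remains is to identify $\mathcal{A}=\MVSob(D)$ and $\mathcal{B}=\MVslc(D)$, and to check that the functors land in the right subcategories.

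For the identifications, the lemma following Definition \ref{sober} says that $X$ is sober iff $\eta_X$ is a homeomorphism, so $\mathcal{A}=\MVSob(D)$. Lemma \ref{spatial-equiv} says that $L$ is spatial iff $\e_L$ is an isomorphism, so $\mathcal{B}=\MVslc(D)$. Both are full subcategories, because morphisms are just continuous maps and frame homomorphisms.

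Next I check that the functors restrict as needed. Lemma \ref{equiv 2} gives that $\O(X)$ is spatial for every $D$-laminated MV-space $X$, in particular for sober ones, so $\O$ maps $\MVSob(D)$ into $\MVslc(D)$. Lemma \ref{equiv 1} gives that $\pt L$ is sober for every $D$-frame $L$, so $\pt$ maps $\MVslc(D)$ into $\MVSob(D)$. Then for $X$ sober, $\eta_X\colon X\to \pt\O(X)$ is a natural isomorphism $\id\cong \pt\circ\O$ on $\MVSob(D)$. For $L$ spatial, $\e_L$ is an isomorphism, giving $\O\circ\pt\cong \id$ on $\MVslc(D)$. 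Naturality of both is inherited from the adjunction.

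The only delicate point is the bookkeeping of directions in $\MVfm(D)^{\op}$. One must confirm that $\e_M$, written as $M\to\O(\pt M)$ in $\MVfm(D)$, is an isomorphism there iff it is one in the opposite category, which is immediate. One must also confirm that Lemma \ref{spatial-equiv}(b) is exactly the condition defining $\mathcal{B}$. I expect no real obstacle beyond this, since the corollary is essentially the formal consequence of the three cited results.
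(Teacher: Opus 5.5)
Your proposal is correct and follows essentially the paper's route: the paper derives the corollary directly from Theorem \ref{adjuncion P-I} and Lemmas \ref{equiv 1} and \ref{equiv 2}, tacitly using, as you do explicitly, the characterizations of sobriety and spatiality by $\eta_X$ and $\e_L$ being isomorphisms, so that the adjunction restricts to an equivalence on its fixed points. One small remark: closure of the two subcategories under $\O$ and $\pt$ already follows from the triangle identities, so Lemmas \ref{equiv 1} and \ref{equiv 2} are not needed for that step; they give the stronger fact that every $\O(X)$ is spatial and every $\pt L$ is sober.
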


\section{Sober MV-topological spaces and Neighbourhood Systems}\label{sistemas}
In the previous section, we presented a characterization of sober $D$-laminated  MV-spaces which uses the adjunction of Theorem \ref{adjuncion P-I}. In this section, we extend some results from sober spaces to the context of $D$-laminated MV-spaces and establish a characterization involving the concept of a neighbourhood system. To this end, we first consider the following notions.

The notion of interior in MV-spaces,  as defined in \cite{tesis},  naturally applies to $D$-laminated MV-spaces.
\begin{definition}\label{interior} 
	Let $ (X, \O(X))$ be a $D$-laminated MV-space and $ \alpha $ a fuzzy set in $ X $. The interior of $\alpha$ is defined by
	\[ \alpha^\circ = \bigvee \{ \beta \in \tau : \beta \leq \alpha \}, \]
    \end{definition}
    It is clear that $\alpha^\circ$ is the largest open set contained in $\alpha$. The following are obvious properties of the interior:
    \begin{proposition}
  Let $(X,\O(X))$ be a $D$-laminated MV-topological space and let $\alpha,\beta$ be fuzzy sets in $X$. Then:
  \begin{enumerate}[(i)]
    \item $(\alpha^\circ)^\circ=\alpha^\circ$
    \item $\alpha^\circ\leq\alpha$
    \item If $\alpha\leq\beta$ then $\alpha^\circ\leq\beta^\circ$
      \end{enumerate}
  \end{proposition}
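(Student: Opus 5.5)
The plan is to derive all three properties from two facts about the interior in Definition \ref{interior}. First, $\alpha^\circ$ is the supremum of a subset of $\tau=\O(X)$, so it is open by condition (ii) of Definition \ref{$D$-space}. Second, $\alpha^\circ$ is an upper bound of every open $\beta\le\alpha$, because it is defined as the join of all of them. Both facts come straight from the axioms of a $D$-laminated MV-space. So I would state them first as the remark that $\alpha^\circ$ is the largest open fuzzy set below $\alpha$, and then prove the three items in the order (ii), (iii), (i).

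For (ii), every $\beta$ in the indexing set satisfies $\beta\le\alpha$, so $\alpha$ is an upper bound of that set and hence $\alpha^\circ\le\alpha$. This works pointwise in $[0,1]^X$, where joins are computed pointwise. It also covers the empty case, where $\alpha^\circ=\0\le\alpha$.

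For (iii), assume $\alpha\le\beta$. Then $\{\gamma\in\tau:\gamma\le\alpha\}\subseteq\{\gamma\in\tau:\gamma\le\beta\}$, and joins are monotone with respect to inclusion of the indexed families, so $\alpha^\circ\le\beta^\circ$. An alternative is to note that $\alpha^\circ$ is open and $\alpha^\circ\le\alpha\le\beta$ by (ii), so $\alpha^\circ$ is one of the terms whose join is $\beta^\circ$.

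For (i), (ii) applied to the fuzzy set $\alpha^\circ$ gives $(\alpha^\circ)^\circ\le\alpha^\circ$. For the reverse inequality, $\alpha^\circ$ belongs to $\tau$ by closure under arbitrary joins, and trivially $\alpha^\circ\le\alpha^\circ$. So $\alpha^\circ$ is among the opens whose join defines $(\alpha^\circ)^\circ$, giving $\alpha^\circ\le(\alpha^\circ)^\circ$.

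There is no genuine obstacle here. The only point needing care is to invoke explicitly that $\tau$ is closed under arbitrary suprema, including the empty one, so that $\alpha^\circ$ is itself open. This is used in (i) and is exactly what makes $\alpha^\circ$ the largest open set below $\alpha$.
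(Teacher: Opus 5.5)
Your proof is correct and matches the paper's approach. The paper gives no separate argument: it only notes that $\alpha^\circ$ is the largest open set contained in $\alpha$, which holds because $\tau$ is closed under arbitrary joins, and it calls (i)--(iii) obvious consequences of that fact — exactly the fact you make explicit and then use.
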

\begin{definition}\label{operador interior}
	Let $ X $ be a set. A mapping $ f: [0,1]^{X} \rightarrow [0,1]^{X} $ is an MV-interior operator if and only if it satisfies the following properties:
	\begin{enumerate}[\textrm{I}1.]
		\item $ f(\mathbf{1}) = 1 $
		\item $ f(\alpha) \leq \alpha $
		\item $ f(f(\alpha)) = f(\alpha) $
		\item $ f(\alpha) \wedge f(\beta) = f(\alpha \wedge \beta) $
		\item $ f(\alpha) \oplus f(\beta) \leq f(\alpha \oplus \beta) $
		\item $ f(\alpha) \odot f(\beta) \leq f(\alpha \odot \beta) $
	\end{enumerate}
\end{definition}

Given that $ \alpha \leq \beta $ if and only if $ \alpha \wedge \beta = \alpha $, we obtain the following property as a consequence of I4.
\begin{equation}\label{monotona}
    \textrm{If $\alpha \leq \beta$, then $f(\alpha) \leq f(\beta)$.}
\end{equation}

\begin{proposition}
  The interior of  Definition \ref{interior} is an MV-interior operator.
\end{proposition}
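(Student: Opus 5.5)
We verify the six axioms I1--I6 of Definition \ref{operador interior} for the map $f(\alpha)=\alpha^\circ$, using two facts throughout. First, $\alpha^\circ$ is itself open, because it is a join of open sets and $\tau=\O(X)$ is closed under arbitrary joins (condition (ii) of Definition \ref{$D$-space}). Second, $\alpha^\circ$ is the largest open set below $\alpha$. From these, an open $\gamma$ satisfies $\gamma\le\alpha$ iff $\gamma\le\alpha^\circ$.

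The first three axioms are immediate. For I1, $\1\in\tau$ and $\1\le\1$, so $\1^\circ=\1$. Axioms I2 and I3 are items (ii) and (i) of the preceding proposition. For I3 one can also argue directly: $\alpha^\circ$ is open, and any open set equals its own interior.

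For I4 we argue in two directions. Monotonicity (item (iii) of the preceding proposition) gives $(\alpha\wedge\beta)^\circ\le\alpha^\circ\wedge\beta^\circ$. Conversely, $\alpha^\circ\wedge\beta^\circ$ is open by condition (vi) of Definition \ref{$D$-space}, and it lies below $\alpha\wedge\beta$ by I2. Hence it lies below $(\alpha\wedge\beta)^\circ$ by maximality.

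I5 and I6 follow the same pattern. The pointwise operations $\oplus$ and $\odot$ on $[0,1]^X$ are monotone in each argument, since they are monotone on $[0,1]$. So $\alpha^\circ\le\alpha$ and $\beta^\circ\le\beta$ give $\alpha^\circ\oplus\beta^\circ\le\alpha\oplus\beta$ and $\alpha^\circ\odot\beta^\circ\le\alpha\odot\beta$. The left-hand sides are open by conditions (v) and (iv) of Definition \ref{$D$-space}. By maximality of the interior, they are therefore bounded above by $(\alpha\oplus\beta)^\circ$ and $(\alpha\odot\beta)^\circ$ respectively.

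There is no real obstacle. The only point needing care is the openness of $\alpha^\circ$, since every maximality argument above uses it. Openness holds by the closure of $\tau$ under arbitrary joins, so the empty join $\0$ is covered when no open set lies below $\alpha$. Beyond that, the proof only needs monotonicity of $\oplus$ and $\odot$ in $[0,1]$.
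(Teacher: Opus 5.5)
Your proof is correct and follows essentially the same route as the paper. You treat I1--I3 as immediate, obtain I4 from monotonicity in one direction and from openness of $\alpha^\circ\wedge\beta^\circ$ plus maximality of the interior in the other, and derive I5--I6 from the monotonicity of $\oplus,\odot$ together with closure of $\tau$ under these operations; your explicit remark that $\alpha^\circ$ is open because $\tau$ is closed under arbitrary (including empty) joins makes precise a point the paper leaves implicit.
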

\begin{proof}
Let $(X,\O(X))$ be a $D$-laminated MV-space. Properties I1 to I3 follow immediately from the definition. For I4, we have that $\alpha^\circ\leq\alpha$ and $\beta^\circ\leq\beta$, then $\alpha^\circ\wedge\beta^\circ\leq\alpha\wedge\beta$ and as $\alpha^\circ\wedge\beta^\circ\in\tau$, then $\alpha^\circ\wedge\beta^\circ\leq(\alpha\wedge\beta)^\circ$. On the other hand, $\alpha\wedge\beta\leq\alpha,\beta$, then $(\alpha\wedge\beta)^\circ\leq\alpha^\circ,\beta^\circ$ and therefore $(\alpha\wedge\beta)^\circ\leq\alpha^\circ\wedge\beta^\circ$. Thus $(\alpha\wedge\beta)^\circ=\alpha^\circ\wedge\beta^\circ$. For what concerns I5, we have that $\alpha^\circ\leq\alpha$ and $\beta^\circ\leq\beta$ then $\alpha^\circ\oplus\beta^\circ\leq\alpha\oplus\beta$, so $\alpha^\circ\oplus\beta^\circ\leq(\alpha\oplus\beta)^\circ$ because $\alpha^\circ\oplus\beta^\circ\in\tau$. The proof of I6 is analogous to the one of I5.
\end{proof}
The previous result shows that every MV-topological space canonically defines an MV-interior operator. The converse is also true, as proven by the following result.
\begin{theorem}
Let $f$ be an MV-interior operator on $X$, let $\tau=\{\alpha\in [0,1]^X:f(\alpha)=\alpha\}$ then $(X,\tau)$ is a $D$-laminated MV-space and for every $\beta\in [0,1]^X$, $f(\beta)$ is the $\tau$-interior of $\beta$. The topology $\tau$ thus determined will be called the MV-topology associated with the MV-interior operator $f$.
\end{theorem}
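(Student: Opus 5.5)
The plan is to check the clauses of Definition \ref{$D$-space} one at a time for the set $\tau$ of fixed points of $f$, and then identify $f$ with the $\tau$-interior. Throughout I will use I2 together with the monotonicity property (\ref{monotona}) derived from I4. The basic device is this: to show that some combination $\gamma$ of fixed points is again a fixed point, prove $\gamma\leq f(\gamma)$ using the hypotheses, since $f(\gamma)\leq\gamma$ always holds by I2.

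\emph{Top and empty join.} $\1\in\tau$ is exactly I1. Also $f(\0)\leq\0$ by I2, so $\0\in\tau$; this is consistent with the footnote to Definition \ref{$D$-space}.

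\emph{Arbitrary joins.} Let $\{\alpha_i\}_{i\in I}\subseteq\tau$. Then $\alpha_i\leq\bigvee_i\alpha_i$, so by (\ref{monotona}) we get $\alpha_i=f(\alpha_i)\leq f(\bigvee_i\alpha_i)$ for every $i$. Hence $\bigvee_i\alpha_i\leq f(\bigvee_i\alpha_i)$, and I2 gives equality.

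\emph{Binary operations.} For $\alpha,\beta\in\tau$, I4 gives $f(\alpha\wedge\beta)=f(\alpha)\wedge f(\beta)=\alpha\wedge\beta$. I5 gives $\alpha\oplus\beta=f(\alpha)\oplus f(\beta)\leq f(\alpha\oplus\beta)\leq\alpha\oplus\beta$. I6 handles $\odot$ in the same way.

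\emph{Scalars.} The clause I expect to be the real obstacle is (iii): $r\odot\alpha\in\tau$ for $r\in D$ and $\alpha\in\tau$. Axioms I1--I6 do not mention scalars at all, so this cannot follow in general. For $D=\2$ it is automatic: $0\odot\alpha=\0\in\tau$ and $1\odot\alpha=\alpha$. For larger $D$ I would try to argue via I6 applied to $\alpha$ and a constant map $\underline{r}$. That only works if $f(\underline{r})=\underline{r}$, which is again not implied by I1--I6. So I expect to add a stratification axiom to Definition \ref{operador interior}, either $r\odot f(\alpha)\leq f(r\odot\alpha)$ for all $r\in D$, or equivalently (via I6) $f(\underline{r})=\underline{r}$ for $r\in D$. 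Under either version the same sandwich argument applies: $r\odot\alpha=r\odot f(\alpha)\leq f(r\odot\alpha)\leq r\odot\alpha$. Without such an axiom the statement should be read with $D=\2$, i.e.\ for MV-spaces.

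\emph{$f$ is the $\tau$-interior.} Fix $\beta\in[0,1]^X$. By I3, $f(\beta)\in\tau$, and by I2, $f(\beta)\leq\beta$. So $f(\beta)\leq\beta^\circ$. Conversely, any $\gamma\in\tau$ with $\gamma\leq\beta$ satisfies $\gamma=f(\gamma)\leq f(\beta)$ by (\ref{monotona}). Taking the join over all such $\gamma$ gives $\beta^\circ\leq f(\beta)$, so $f(\beta)=\beta^\circ$.
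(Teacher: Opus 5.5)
Your proposal is correct. The checks of $\mathbf{1}$, $\mathbf{0}$, arbitrary joins, closure under $\wedge,\oplus,\odot$, and $f(\beta)=\beta^\circ$ are the paper's argument. For the interior you bound each open $\gamma\le\beta$ by $f(\beta)$ rather than applying $f$ to $\beta^\circ$, which is the same thing.

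The only real difference is the scalar clause. You take $D$ as prescribed in advance, and you are right that the statement can then fail. For instance, $f(\mathbf{1})=\mathbf{1}$ and $f(\alpha)=\mathbf{0}$ otherwise satisfies I1--I6 and gives $\tau=\{\mathbf{0},\mathbf{1}\}$. This $\tau$ is not closed under $r\odot-$ for $r\in(0,1)$, so it is not $[0,1]$-laminated.

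The paper avoids any extra axiom by letting $D$ depend on $f$: it sets $D=\{r\in[0,1]: f(\underline{r})=\underline{r}\}$, the set of constants lying in $\tau$.
\begin{itemize}
\item This $D$ is a subquantale: it contains $0$ and $1$, and constants in $\tau$ are closed under joins and $\odot$ by what was already shown.
\item For $r\in D$, $r\odot\alpha=\underline{r}\odot\alpha\in\tau$ follows directly from $\odot$-closure.
\end{itemize}

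Your stratification axiom for a prescribed $D$ is equivalent to $D\subseteq\{r: f(\underline{r})=\underline{r}\}$. In one direction take $\alpha=\mathbf{1}$ and use I1 and I2; in the other use I6. Moreover $(X,\tau)$ is $D$-laminated exactly when this inclusion holds, since $\underline{r}=r\odot\mathbf{1}$. So the paper's $D$ is the largest subquantale for which the conclusion holds, and your fallback $D=\mathbf{2}$ is the smallest.

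Your version proves the theorem under the existential reading of $D$ and makes explicit the dependence of $D$ on $f$ that the statement leaves implicit. The paper's choice gives the sharpest $D$ with no additional hypothesis on $f$.
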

\begin{proof}
By I1 of  Definition \ref{interior}, we have that $\1\in \tau$. By I2, $f(\0)\leq\0$, then $f(\0)=\0$, and so $\0\in\tau$. Now, let $\alpha,\beta\in\tau$, i. e., $f(\alpha)=\alpha$ and $f(\beta)=\beta$ then:
 \begin{enumerate}[(i)]
    \item $\alpha\wedge\beta\in \tau$ because, by I4, $f(\alpha\wedge\beta)=f(\alpha)\wedge f(\beta)=\alpha\wedge\beta$;
    \item by  I5, $\alpha\oplus\beta=f(\alpha)\oplus f(\beta)\leq f(\alpha\oplus\beta)$ and, by I2, $f(\alpha\oplus\beta)\leq \alpha\oplus\beta$, then $f(\alpha\oplus\beta)=\alpha\oplus\beta$ and therefore $\alpha\oplus\beta\in\tau$;
    \item by I6 $\alpha\odot\beta=f(\alpha)\odot f(\beta)\leq f(\alpha\odot\beta)$ and by I2, $f(\alpha\odot \beta)\leq \alpha\odot\beta$, then $f(\alpha\odot \beta)=\alpha\odot\beta$ and therefore $\alpha\odot\beta\in\tau$.
  \end{enumerate}
  Now, to prove condition (ii) of Definition \ref{$D$-space}, we consider a family  $\{\alpha_i:i\in I\}$ of elements of $\tau$,  that is, $f(\alpha_i)=\alpha_i$ for every $i\in I$. We know that for all $i\in I$, $\alpha_i\leq\bigvee_{i\in I}\alpha_i$. Then, by (\ref{monotona}), $f(\alpha_i)\leq f(\bigvee_{i\in I}\alpha_i)$ for each $i\in I$, whence $\bigvee_{i\in I}f(\alpha_i)\leq f(\bigvee_{i\in I}\alpha_i)$. Moreover, by I2 and the definition of $\tau$
  $$\bigvee_{i\in I}f(\alpha_i)\leq f \left( \bigvee_{i\in I}\alpha_i \right)\leq\bigvee_{i\in I}\alpha_i=\bigvee_{i\in I}f(\alpha_i),$$ then $f(\bigvee_{i\in I}\alpha_i)=\bigvee_{i\in I}\alpha_i$, and so $\bigvee_{i\in I}\alpha_i\in \tau$. Consequently, if  $\underline{D}$ is the set of constant functions of the form
  \begin{align*}
       \underline{r}: X& \to [0,1]\\
       x& \mapsto r
  \end{align*} that belongs to $\tau$, then  $D=\{r \in [0,1]: \underline{r} \in \underline{D}\}$  forms a  subquantale of $[0,1]$, and by item (iii) above, it follows that $r \odot \alpha \in \tau$  for all $r \in D$. Therefore $( X, \tau)$ is a $D$-laminated MV-space.
  
It remains to show that $f(\alpha)=\alpha^\circ$. By definition, $\alpha^\circ=\bigvee\{\beta\in \tau:\beta\leq\alpha\}$ and, by  I3, $f(\alpha)\in \tau$ for every $\alpha\in [0,1]^X$; moreover $f(\alpha)\leq\alpha$, and therefore $ f(\alpha)\leq\alpha^\circ$. On the other hand, since $\alpha^\circ\in \tau$, then $\alpha^\circ=f(\alpha^\circ)\leq f(\alpha)$, hence $f(\alpha)=\alpha^\circ$.
\end{proof}

We shall now introduce neighbourhood systems according to the analogous definition of \cite{hohle99}. 

\begin{definition} \label{vecindad}
Let $(X, \tau)$ be a $D$-laminated MV-space and $x \in X$. A fuzzy set $u \in [0,1]^{X }$ is a \emph{neighbourhood of $x$} if there exists $\alpha \in \tau$ such that $\alpha \leq u$ and $\alpha(x)=1$. The map 
	\begin{align*}
		\mu_{x}^{\tau}:[0,1]^{X}&\rightarrow [0,1]\\
		u & \mapsto u^{\circ}(x)
	\end{align*}
	is called the \emph{neighbourhood system of  $x$ for $\tau$}.
\end{definition}
	Since $ u^\circ \in \tau $, we have that $ u $ is a neighbourhood of  $ x \in X $ if and only if $ u^\circ(x) = 1 $. In other words, $ u \in [0,1]^X $ is a neighbourhood of $x $ if the degree of membership of $ x $ in the interior of $ u $ is 1. Neighbourhoods of the same point $x$ are obviously closed by $\oplus$ and, due to the monotonicity of the interior operator, they are also upward closed and, hence, closed by $\vee$. Moreover, by I4 and I6 of Definition \ref{interior}, neighbourhoods of the same point $x$ in MV-topological spaces are also closed by $\odot$ and $\wedge$.

\begin{remark}\label{funcion de vecindades}
	Let $ \mathcal{U}^\tau: X \rightarrow [0,1]^{[0,1]^X} $ be defined by $ \mathcal{U}(x) = \mu^{\tau}_{x} $ for each $ x \in X $. The mapping satisfies the following properties for every $ x, y \in X $ and $ \alpha, \beta \in [0,1]^X $:
	\begin{enumerate}[U1.]
		\item $ \mu^{\tau}_{x}(\mathbf{1}) = 1 $
		\item $ \mu^{\tau}_{x}(\alpha) \leq \alpha(x) $
		\item $ \mu^{\tau}_{x}(\alpha) \wedge \mu^{\tau}_{x}(\beta) = \mu^{\tau}_{x}(\alpha \wedge \beta) $
		\item $ \mu^{\tau}_{x}(\alpha) \oplus \mu^{\tau}_{x}(\beta) \leq \mu^{\tau}_{x}(\alpha \oplus \beta) $
		\item $ \mu^{\tau}_{x}(\alpha) \odot \mu^{\tau}_{x}(\beta) \leq \mu^{\tau}_{x}(\alpha \odot \beta) $
		\item $ \mu^{\tau}_{x}(\alpha) = \bigvee \{ \mu^{\tau}_{x}(\beta) : \beta(y) \leq \mu_{y}^{\tau}(\alpha), \, \forall y \in X \} $
	\end{enumerate}
	As a consequence of U3 we have:
	\begin{enumerate}
		\item[U3'.] $ \alpha \leq \beta $ implies $ \mu^{\tau}_{x}(\alpha) \leq \mu^{\tau}_{x}(\beta) $.
	\end{enumerate}
\end{remark}

This set of properties leads to the following definition.
\begin{definition}
	Let $\mathcal{U} =X \to [0,1]^{[0,1]^{X}}$ such that, for each $x \in X$, $\mathcal{U}(x)=\mu_x$ satisfies conditions U1-U6. We say that $ \mathcal{U} $ is an \emph{MV-neighbourhood function}.
\end{definition}

\begin{proposition}
	An MV-interior operator $ f: [0,1]^X \rightarrow [0,1]^X $ induces an MV-neighbourhood function \begin{align*} \mathcal{U}_f:& X \rightarrow [0,1]^{[0,1]^X}\\
		&x \mapsto \mu_{x} 
		\end{align*} where    $\mu_x(\alpha)= f(\alpha)(x)$
   
	\end{proposition}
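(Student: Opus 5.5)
We verify U1--U6 one at a time for $\mu_x(\alpha)=f(\alpha)(x)$, translating each axiom I1--I6 of Definition \ref{operador interior} pointwise at $x$.

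U1 is I1 evaluated at $x$, since $f(\1)=\1$ gives $\mu_x(\1)=1$. U2 is I2 evaluated at $x$: $f(\alpha)\le\alpha$ gives $\mu_x(\alpha)\le\alpha(x)$. U3 follows from I4, because meets in $[0,1]^X$ are computed pointwise. Thus $\mu_x(\alpha)\wedge\mu_x(\beta)=(f(\alpha)\wedge f(\beta))(x)=f(\alpha\wedge\beta)(x)$. In the same way, U4 and U5 come from I5 and I6, since $\oplus$, $\odot$ and $\le$ are pointwise.

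The only nontrivial axiom is U6, and we prove both inequalities. Note first that $\mu_y(\alpha)=f(\alpha)(y)$ for every $y$. So the condition ``$\beta(y)\le\mu_y(\alpha)$ for all $y$'' says exactly that $\beta\le f(\alpha)$.

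For ``$\le$'', take $\beta=f(\alpha)$. It satisfies $\beta\le f(\alpha)$, and by I3 its contribution to the join is $\mu_x(f(\alpha))=f(f(\alpha))(x)=f(\alpha)(x)=\mu_x(\alpha)$.

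For ``$\ge$'', let $\beta\le f(\alpha)$. By the monotonicity (\ref{monotona}) derived from I4, together with I3, we get $f(\beta)\le f(f(\alpha))=f(\alpha)$. Hence $\mu_x(\beta)\le\mu_x(\alpha)$, and so the supremum over all admissible $\beta$ is at most $\mu_x(\alpha)$.

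The only point requiring care is this reformulation of U6's index set as $\{\beta:\beta\le f(\alpha)\}$. Once it is made, idempotence and monotonicity finish the argument, and we conclude that $\mathcal{U}_f$ is an MV-neighbourhood function.
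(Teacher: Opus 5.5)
Your proof is correct and is essentially the paper's argument. You obtain U1--U5 pointwise from I1, I2, I4--I6, and you get U6 by reading the index condition as $\beta \le f(\alpha)$. The witness $\beta = f(\alpha)$ with I3 gives one inequality, and monotonicity plus I3 gives the other.
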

\begin{proof}
	Properties U1--U5 readily follow from I1, I2, I4, I5 and I6, respectively. 
    To verify that $ \mathcal{U}_f $ satisfies $ U6 $, we consider
	\[
	A = \{ \mu_x(\beta) : \beta(y) \leq \mu_y(\alpha), \, \forall y \in X \}
	\]
	and note that
    $ \beta \in [0,1]^X $ and $ \mu_x(\beta) \in A $, then $ \beta(y) \leq \mu_y(\alpha) = f(\alpha)(y) $ for every $ y \in X $. This implies $ \beta \leq f(\alpha) $. Using properties I3 and $ U3 $, we have:
	\[
	\mu_p(\beta) \leq \mu_x(f(\alpha)) = f(f(\alpha))(x) = f(\alpha)(x) = \mu_x(\alpha),
	\]
	which implies $ \mu_x(\alpha) \geq \bigvee A $. Conversely, if $ \gamma = f(\alpha) $, then $ \gamma(y) = \mu_y(\alpha) $ for every $ y \in X $, so $ \mu_x(\gamma) \in A $. Moreover:
	\[
	\mu_x(\gamma) = f(f(\alpha))(x) = \mu_x(\alpha),
	\]
	which shows $ \mu_x(\alpha) \leq \bigvee A $. Thus, $ \mu_x(\alpha) = \bigvee A $.
\end{proof}

\begin{proposition}\label{funcion-operador}
	Let $ \mathcal{U}: X \to [0,1]^{[0,1]^{X}}  $ be an MV-neighbourhood function on $ X $. Then $ \mathcal{U} $ induces an interior operator $ f: [0,1]^X \rightarrow [0,1]^X $, where $ f(\alpha)(y) = \mu_y(\alpha) $ for each $ y \in X $.
\end{proposition}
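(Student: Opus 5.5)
We check that $f$, defined by $f(\alpha)(y)=\mu_y(\alpha)$, satisfies I1--I6 of Definition \ref{operador interior}, taking each from the corresponding neighbourhood axiom pointwise in $y$. Since the order and the operations $\wedge,\oplus,\odot$ on $[0,1]^X$ are all pointwise, each axiom can be verified at an arbitrary point $y\in X$.

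Most of the axioms are direct translations. I1 follows from U1, because $f(\1)(y)=\mu_y(\1)=1$ for every $y$. I2 follows from U2, because $f(\alpha)(y)=\mu_y(\alpha)\le\alpha(y)$. I4 is U3 read pointwise: $(f(\alpha)\wedge f(\beta))(y)=\mu_y(\alpha)\wedge\mu_y(\beta)=\mu_y(\alpha\wedge\beta)=f(\alpha\wedge\beta)(y)$. Likewise I5 and I6 come from U4 and U5, since $(f(\alpha)\oplus f(\beta))(y)=\mu_y(\alpha)\oplus\mu_y(\beta)\le \mu_y(\alpha\oplus\beta)$, and similarly for $\odot$.

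The only step that needs work, and the main obstacle, is idempotence I3, which must come from U6. First, I2 gives $f(f(\alpha))\le f(\alpha)$. For the reverse inequality, fix $x\in X$ and put $\gamma=f(\alpha)$.
\begin{enumerate}[(1)]
\item Any $\beta$ admissible in U6 satisfies $\beta(y)\le\mu_y(\alpha)=\gamma(y)$ for all $y$, so $\beta\le\gamma$.
\item By U3' this gives $\mu_x(\beta)\le\mu_x(\gamma)$.
\item Taking the supremum over all admissible $\beta$, U6 yields
\[
\mu_x(\alpha)=\bigvee\{\mu_x(\beta):\beta(y)\le\mu_y(\alpha)\ \forall y\}\le \mu_x(\gamma)=f(f(\alpha))(x).
\]
\end{enumerate}
Hence $f(\alpha)(x)\le f(f(\alpha))(x)$ for every $x$, and therefore $f(f(\alpha))=f(\alpha)$.

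Note that $\gamma$ itself is admissible in U6, but we do not need this; U6 together with U3' is enough for the inequality $\mu_x(\alpha)\le\mu_x(\gamma)$.
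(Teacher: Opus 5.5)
Your proof is correct and follows essentially the same route as the paper's. Axioms I1, I2, I4--I6 are read off pointwise from U1--U5, and $f(f(\alpha))\le f(\alpha)$ comes from U2/I2. The reverse inequality is obtained, as in the paper, by observing that every $\beta$ admissible in U6 satisfies $\beta\le f(\alpha)$, applying U3', and then taking the supremum.
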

\begin{proof}
It is immediate to verify thatp properties I1--I6 follow from U1--U5. So, $ f(\alpha) \leq \alpha $ for $ \alpha \in [0,1]^X $, and by U2,
	\[
	f(f(\alpha))(y) = \mu_y(f(\alpha)) \leq \mu_y(\alpha) = f(\alpha)(y),
	\]
	which implies $ f(f(\alpha)) \leq f(\alpha) $. 

	To prove the converse inequality, note that property U6 can be expressed in terms of $f$ as
	\[
	f(\alpha)(x) = \bigvee_{\beta \leq f(\alpha)} f(\beta)(x).
	\]
	Using U3', if $ \beta \leq f(\alpha) $, then $ f(\beta)(x) \leq f(f(\alpha))(x) $ for every $ x \in X $. Thus, $ f(\beta) \leq f(f(\alpha)) $, and we obtain
	\[
	f(\alpha) = \bigvee_{\beta \leq f(\alpha)} f(\beta) \leq f(f(\alpha)).
	\]
\end{proof}

\begin{corollary}
An MV-neighbourhood function
$ \mathcal{U}: X \to [0,1]^{[0,1]^{X}}$, where $\mathcal{U}(x)=\mu_{x}$ for $x \in X$, induces a $D$-laminated MV-topology 
\[ 
\tau = \{ \alpha \in [0,1]^X : \mu_y(\alpha) = \alpha(y), \ \ \forall y \in X \}.
\]
\end{corollary}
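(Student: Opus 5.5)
The plan is to compose two results proved just before this corollary. By Proposition \ref{funcion-operador}, the MV-neighbourhood function $\mathcal{U}$ induces an MV-interior operator $f:[0,1]^X\to[0,1]^X$ given by $f(\alpha)(y)=\mu_y(\alpha)$. The theorem following Definition \ref{operador interior} says that the fixed points $\tau_f=\{\alpha\in[0,1]^X : f(\alpha)=\alpha\}$ of any MV-interior operator form a $D$-laminated MV-topology on $X$. So it is enough to show that the set $\tau$ in the statement equals $\tau_f$.

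That identification is a pointwise unfolding of the definitions. We have $\alpha\in\tau_f$ if and only if $f(\alpha)(y)=\alpha(y)$ for every $y\in X$. By the definition of $f$, this holds if and only if $\mu_y(\alpha)=\alpha(y)$ for every $y\in X$, which is exactly the condition defining $\tau$. Hence $\tau=\tau_f$. The closure properties (i)--(vi) of Definition \ref{$D$-space} are then inherited directly: $\1\in\tau$ by U1, $\0\in\tau$ by U2, and closure under $\wedge$, $\oplus$, $\odot$ and arbitrary joins is supplied by the cited theorem.

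The main obstacle is the role of $D$, that is, condition (iii) of Definition \ref{$D$-space}. The cited theorem does not fix $D$ beforehand. It takes $D$ to be the set of $r\in[0,1]$ such that the constant $\underline{r}$ belongs to $\tau$, and then obtains $r\odot\alpha\in\tau$ from closure under $\odot$ applied to $\underline{r}$ and $\alpha$. I would follow the same reading here, so the corollary holds with $D$ defined as that set of constants. Its closure under $\odot$ and joins comes from the topology axioms, so $D$ is indeed a subquantale of $[0,1]$.

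If instead $D$ is meant to be a fixed subquantale given in advance, one needs the extra hypothesis $\mu_y(\underline{r})=r$ for all $r\in D$ and $y\in X$. Under that hypothesis $\underline{r}\in\tau$, and then $r\odot\alpha=\underline{r}\odot\alpha\in\tau$ by U5 together with U2. I would record this as a remark.
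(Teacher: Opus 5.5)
Your proof is correct, but it is organized differently from the paper's. The paper never passes through the interior operator. It checks the closure axioms on $\tau$ directly from the U-axioms:
\begin{itemize}
\item $\mathbf{1},\mathbf{0}\in\tau$ by U1 and U2;
\item closure under $\circledast\in\{\wedge,\oplus,\odot\}$ by the squeeze $\alpha(x)\circledast\beta(x)=\mu_x(\alpha)\circledast\mu_x(\beta)\le\mu_x(\alpha\circledast\beta)\le(\alpha\circledast\beta)(x)$, using U3/U4/U5 and U2;
\item closure under joins via U2 and U6.
\end{itemize}
It then takes $D$ to be the set of $r$ with $\underline{r}\in\tau$, exactly as you do.

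Your route instead identifies $\tau$ with the fixed points of the interior operator of Proposition \ref{funcion-operador}, an identification you justify correctly. You then apply the fixed-point theorem for MV-interior operators. This is shorter, and it gives more. The last part of that theorem says $f(\alpha)=\alpha^\circ$ for the induced topology, i.e.\ $\mu_y(\alpha)=\alpha^\circ(y)$ for all $\alpha$ and $y$. So $\mathcal{U}$ is precisely the neighbourhood function of $\tau$. This immediately gives the next proposition in the paper (that $u$ is a neighbourhood of $x$ iff $\mu_x(u)=1$), which the paper proves separately.

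The price is that you go through idempotency I3, which is where U6 enters Proposition \ref{funcion-operador}. The closure properties themselves never need U6. In the join step, U2 and the monotonicity U3' already give
\[
\textstyle\bigvee_i\beta_i(x)=\bigvee_i\mu_x(\beta_i)\le\mu_x\bigl(\bigvee_i\beta_i\bigr)\le\bigvee_i\beta_i(x).
\]
So a direct check shows the corollary holds for any map satisfying only U1--U5.

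Your closing remark about a prescribed $D$ is also right. The condition $\mu_y(\underline{r})=r$ is in fact necessary as well as sufficient: take $\alpha=\mathbf{1}$.
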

\begin{proof}
By property U1, we have that $\mathbf{1} \in \tau$, and by U2, $\mu_x(\0) \leq \0(x) = 0$, which implies that $\0 \in \tau$.

For all $\alpha, \beta \in \tau$, by U2 and U4, 
\[
(\alpha \oplus \beta)(x) = \alpha(x) \oplus \beta(x) = \mu_x(\alpha) \oplus \mu_x(\beta) \leq \mu_x(\alpha \oplus \beta) \leq (\alpha \oplus \beta)(x),
\]
which implies that $\mu_x(\alpha \oplus \beta) = (\alpha \oplus \beta)(x)$, and thus $(\alpha \oplus \beta) \in \tau.$ Similarly, $(\alpha \odot \beta) \in \tau$ and $(\alpha \wedge \beta) \in \tau.$

Let $\{\beta_i\}_{i \in I}$ be elements of $\tau$. Then $\mu_x\left(\bigvee_{i \in I} \beta_i\right) \leq \bigvee_{i \in I} \beta_i$, and
\[
\mu_x\left(\bigvee_{i \in I} \beta_i\right) = \bigvee\{\mu_x(\beta) : \beta(y) \leq \mu_y\left(\bigvee_{i \in I} \beta_i\right), \forall y \in X\},
\]
and since $\beta_i(y) = \mu_y(\beta_i) \leq \mu_y\left(\bigvee_{i \in I} \beta_i\right)$, we have that $\bigvee_{i \in I} \beta_i(x) \leq \mu_x\left(\bigvee_{i \in I} \beta_i\right).$ Therefore, $\bigvee_{i \in I} \beta_i \in \tau$. Applying this to the set $\underline{D}$ of constant functions of the form
  \begin{align*}
       \underline{r}: X& \to [0,1]\\
       x& \mapsto r
  \end{align*} 
  that belongs to $\tau$, then  $D=\{r \in [0,1]: \underline{r} \in \underline{D}\}$  forms a subquantale of $[0,1]$, and since $r \odot \alpha \in \tau$,  for all $r \in D$ and $\alpha \in \tau$. Therefore $( X, \tau)$ is a $D$-laminated MV-space.
\end{proof}

\begin{proposition}
Let $\mathcal{U}$ be an MV-neighbourhood function with $\mathcal{U}(y)=\mu_{y}$, $\tau' = \{ \alpha \in [0,1]^X : \mu_y(\alpha) = \alpha(y), \ \forall y \in X \}$ the $D$-laminated MV-space induced by $\cal{U}$, and $\mu_{\tau',x}:[0,1]^{X}\to [0,1]$ the neighbourhood system of a point $x \in X$  for $\tau'$. Then a fuzzy set $u \in [0,1]^{X}$ is a neighbourhood of $x$ for $(X, \tau')$ if and only if $\mu_{x}(u) = 1$.
\end{proposition}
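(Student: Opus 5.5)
The plan is to show that the interior operator of $\tau'$ coincides with the interior operator $f$ induced by $\mathcal{U}$ via Proposition \ref{funcion-operador}, namely $f(\alpha)(y)=\mu_y(\alpha)$. Once this identification is in hand, the claim follows from the remark after Definition \ref{vecindad}: $u$ is a neighbourhood of $x$ for $\tau'$ if and only if $u^{\circ}(x)=1$, where $u^\circ$ is the $\tau'$-interior. So it suffices to prove $u^{\circ}=f(u)$ for every $u\in[0,1]^X$, and then read off $u^\circ(x)=\mu_x(u)$.

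\textbf{Step 1: $\tau'$ is the set of fixed points of $f$.} By definition, $\alpha\in\tau'$ means $\mu_y(\alpha)=\alpha(y)$ for all $y$, i.e.\ $f(\alpha)=\alpha$. So $\tau'=\{\alpha : f(\alpha)=\alpha\}$. By Proposition \ref{funcion-operador}, $f$ is an MV-interior operator. The theorem on MV-interior operators (the one defining the topology associated with an operator) then states that $f(\beta)$ is exactly the $\tau'$-interior of $\beta$ for every $\beta$. Hence $u^{\circ}=f(u)$ and $u^\circ(x)=\mu_x(u)$.

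\textbf{Step 2: direct check of the two implications.} To avoid depending on the details of that theorem, I would also verify the equivalence directly.
\begin{itemize}
\item[($\Leftarrow$)] Suppose $\mu_x(u)=1$. Put $\alpha=f(u)$. Idempotency of $f$, established in the proof of Proposition \ref{funcion-operador} from U2 and U6, gives $f(\alpha)=\alpha$, so $\alpha\in\tau'$. By U2, $\alpha\le u$, and $\alpha(x)=\mu_x(u)=1$. Thus $u$ is a neighbourhood of $x$.
\item[($\Rightarrow$)] Suppose $\alpha\in\tau'$, $\alpha\le u$ and $\alpha(x)=1$. By U3', $1=\alpha(x)=\mu_x(\alpha)\le\mu_x(u)$, so $\mu_x(u)=1$.
\end{itemize}

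\textbf{Main obstacle.} The only nontrivial ingredient is the idempotency $\mu_y(f(u))=\mu_y(u)$ for all $y$, which is precisely where U6 is needed. I will cite the argument of Proposition \ref{funcion-operador} for it, taking care that it uses U6 as an equality (giving $f(u)\le f(f(u))$) together with U2 (giving $f(f(u))\le f(u)$).
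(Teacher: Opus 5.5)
Your proposal is correct, and your Step 2 is exactly the paper's proof: ($\Rightarrow$) via $1=\beta(x)=\mu_x(\beta)\le\mu_x(u)$ using U3', and ($\Leftarrow$) by taking $\gamma_u=f(u)$. This $\gamma_u$ lies in $\tau'$ by the idempotency established in Proposition \ref{funcion-operador} (from U2 and U6), and it satisfies $\gamma_u\le u$ by U2. Your Step 1 is also valid: $\tau'$ is precisely the fixed-point set of $f$, so the interior-operator theorem gives $u^\circ=f(u)$, and this even yields the slightly stronger identity $\mu^{\tau'}_x=\mu_x$.
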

\begin{proof}
If $u$ is a neighbourhood of a point $x \in X$, then there exists $\beta \in \tau'$ such that $\beta(x) = 1$ and $\beta \leq u$. Moreover, by the definition of $\tau'$ and property U3',
\[
1 = \beta(x) = \mu_x(\beta) \leq \mu_x(u).
\]
This implies that $\mu_x(u) = 1.$

Conversely, if $\mu_x(u) = 1$, we assume $\gamma_{u}(y) = \mu_{y}(u)$ for all $y \in X$, and by Proposition \ref{funcion-operador}, the function
\begin{align*}
f: [0,1]^X &\rightarrow [0,1]^X\\
 u &\mapsto \gamma_{u}
\end{align*}
is an interior operator. Thus, $\mu_y(\gamma_{u}) = f(f(u))(y) = f(u)(y) = \gamma_{u}(y)$ for all $y \in X$. Hence, $\gamma_{u} \in \tau'$, and since $\gamma_{u} \leq u$, $u$ is a neighbourhood of $x$ in the $D$-laminated MV-topology $\tau'$.
\end{proof}

\begin{remark}
If $(X, \tau)$ is a D-laminated MV-space and $\mu_x^{\tau}$ is the neighbourhood system of $x \in X$ corresponding to $\tau$, then considering the induced topology on $X$ $$\tau' = \{\alpha \in [0,1]^X : \mu_x^\tau(\alpha) = \alpha(x), \ \forall x \in X\}$$ we have $\tau = \tau'$ because
\[
\alpha \in \tau' \Leftrightarrow \mu_x^\tau(\alpha) = \alpha(x), \ \forall x \in X \Leftrightarrow \bigvee_{\beta \in \tau, \beta \leq \alpha} \beta(x), \ \forall x \in X \Leftrightarrow \alpha \in \tau.
\]
\end{remark}

\begin{exm}
 Let $(X, \tau)$ be the MV-space from Example \ref{ex finito}, that is,
$$\tau = \{\0,  \mathbf{1}, \rho, (\rho \odot \rho), 2(\rho \odot \rho), 3(\rho \odot \rho),4(\rho \odot \rho),5(\rho \odot \rho),(\rho \odot \rho) \oplus \rho, 2(\rho \odot \rho) \oplus \rho  \}$$  and the function $\rho$ is given by \begin{center}
			$\rho(u)= \left\{ \begin{array}{lll}
				0.5 &  \mbox{ if }  u=x  \\
				0.6  & \mbox{ if }  u=y \\
				0.6  & \mbox{ if }  u=z
			\end{array}
			\right.$
		\end{center}
        Each $\beta \in \tau$ can be identified with the ordered triple $(\beta(x), \beta(y), \beta(z))$. However, since $\beta(y)= \beta(z)$ for all $\beta \in \tau$, we simplify this representation to the 
        ordered pair $(\beta(x), \beta(y))$ and we obtain the following lattice
\[
\xymatrix{
&(1,1)\ar@{-}[d] &\\
&(0.5,1)\ar@{-}[rd]\ar@{-}[ld] &\\
(0.5, 0.8)\ar@{-}[d]\ar@{-}[rd] & & (0,1)\ar@{-}[ld] \\ 
(0.5, 0.6)\ar@{-}[rd] & (0,0.8)\ar@{-}[d] \\ 
&(0,0.6)\ar@{-}[d] \\ 
&(0,0.4)\ar@{-}[d] \\ 
&(0,0.2)\ar@{-}[d] \\ 
&(0,0) & \\
}
\]
The neighbourhood system of $x \in X$, $\mu_x^\tau$, where $\mu_x^\tau(\alpha) = \displaystyle\bigvee_{\beta \in \tau, \beta \leq \alpha} \beta(x)$ for $\alpha \in [0,1]^{X}$, is given by
\[
\mu_x(\alpha) = 
\begin{cases}
1 & \text{if } \alpha = \mathbf{1} \\
0.5 & \text{if } 0.5 \leq \alpha \\
0 & \text{otherwise}
\end{cases},
\]
\[
\mu_y(\alpha) = \mu_z(\alpha) = 
\begin{cases}
1 & \text{if } 0.8 < \alpha, \\
0.8 & \text{if } 0.8 \leq (\alpha(y) \wedge \alpha(z)) < 1 \\
0.6 & \text{if } 0.6 \leq (\alpha(y) \wedge \alpha(z)) < 0.8 \\
0.4 & \text{if } 0.4 \leq (\alpha(y) \wedge \alpha(z)) < 0.6 \\
0.2 & \text{if } 0.2 \leq (\alpha(y) \wedge \alpha(z)) < 0.4 \\
0 & \text{if } (\alpha(y) \wedge \alpha(z)) < 0.2
\end{cases}.
\]
\end{exm}
\begin{definition}
	Let $L$ be an MV-frame. A map $\nu: L \rightarrow [0,1]$ is an  \emph{fuzzy MV-filter} if 
	\begin{itemize}
		\item[F1.]$\nu(1)=1$,
		\item[F2.]$\nu(0)=0$,
		\item[F3.] $\nu(\alpha) \wedge \nu(\beta)= \nu(\alpha \wedge \beta)$,
		\item[F4.] $\nu(\alpha) \odot \nu(\beta) \leq \nu(\alpha \odot \beta)$, and
		\item[F5.]  $\nu(\alpha) \oplus \nu(\beta) \leq \nu(\alpha \oplus \beta)$.
	\end{itemize}
\end{definition}

If $\mathcal{U}$ is an MV-neighbourhood function on $X$, then $\mathcal{U}(x)=\mu_{x}$ is an MV-filter of $[0,1]^{X}$, called the \emph{neighbourhood filter at 
x}.
Thus, if $(X, \tau)$ is a $D$-laminated MV-space, then every point $p \in \pt \tau$ is a fuzzy MV-filter of $\tau$. Moreover, if $(X, \tau )$ is sober, each point coincides with the neighbourhood system of a unique point, as stated in the following result.
\begin{proposition}\label{soberchar}
A $D$-laminated MV-space $(X, \tau)$ is sober if and only if for each $p \in \pt \tau$ there is a unique $x \in X$ such that $p= \mu_{x}^\tau$.
\end{proposition}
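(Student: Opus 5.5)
The plan is to reduce the statement to Definition \ref{sober} by showing that, on open sets, the neighbourhood system of a point is exactly the point $\O(\n{x})$. The equality $p=\mu_x^\tau$ has to be read with $\mu_x^\tau$ restricted to $\tau$, since $p$ is only defined on $\tau=\O(X)$, while $\mu_x^\tau$ lives on all of $[0,1]^X$.

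\textbf{Key identity.} For every $x\in X$ and every $\alpha\in\tau$,
\[
\mu_x^\tau(\alpha)=\alpha^\circ(x)=\alpha(x)=\O(\n{x})(\alpha).
\]
This holds because the interior of Definition \ref{interior} is the largest open set below $\alpha$. When $\alpha$ is open, that largest open set is $\alpha$ itself, so $\alpha^\circ=\alpha$. Consequently $\mu_x^\tau|_\tau=\O(\n{x})=\eta_X(x)$, and in particular $\mu_x^\tau|_\tau$ is a point of $\tau$.

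\textbf{Main step.} For $p\in\pt\tau$ and $x\in X$, the identity gives
\[
p=\mu_x^\tau|_\tau \iff p(\alpha)=\alpha(x)\ \text{for all } \alpha\in\tau \iff p=\O(\n{x}).
\]
So for a fixed $p$, the set of $x$ with $p=\mu_x^\tau$ coincides with the set of $x$ with $p=\O(\n{x})$. Requiring existence and uniqueness of such an $x$ for every $p$ is then, word for word, the condition of Definition \ref{sober}. Both directions of the equivalence follow from this single observation.

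\textbf{Expected obstacle.} The mathematics is trivial; the only delicate point is the restriction to $\tau$ noted at the start. If one instead wanted $p$ to determine the whole map $\mu_x^\tau$ on $[0,1]^X$, I would add one remark. For any $u\in[0,1]^X$, $\mu_x^\tau(u)=u^\circ(x)$ with $u^\circ\in\tau$, so $\mu_x^\tau$ is recovered from its restriction via $u\mapsto p(u^\circ)$. This extension is unique, so nothing further needs checking.
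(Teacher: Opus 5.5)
Your proposal is correct and is essentially the paper's proof: the paper likewise notes that $\mu_x^\tau(\alpha)=\alpha^\circ(x)=\alpha(x)=\O(\n{x})(\alpha)$ for $\alpha\in\tau$ and then reads the equivalence off Definition \ref{sober}. Your remark is a valid clarification that the paper leaves implicit: $p=\mu_x^\tau$ means equality on $\tau$, and this already determines $\mu_x^\tau$ on all of $[0,1]^X$ via $u\mapsto p(u^\circ)$.
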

\begin{proof}
    Since, for $\alpha \in \tau$, $\mu_{x}^{\tau}(\alpha)=\alpha^{\circ}(x)=\alpha(x)=\O(\n{x})$,  the result follows from the definition of a sober $D$-laminated MV-space.
\end{proof}	

\begin{corollary}  

\item If $(X, \tau)$ is a  sober $D$-laminated MV-space, then, for each $p \in \pt \tau$, there is a unique $x \in X$ such that $p^{-1}[{1}]$ matches the set of open neighbourhoods of $x$. 
\end{corollary}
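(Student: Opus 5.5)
The plan is to deduce the statement directly from Proposition \ref{soberchar}, combined with the observation that for an open set the neighbourhood degree coincides with its value at the point. Let $(X,\tau)$ be sober and fix $p \in \pt \tau$. By Proposition \ref{soberchar} there is a unique $x \in X$ with $p = \mu_x^\tau$ on $\tau$. Since $\alpha^\circ = \alpha$ for every $\alpha \in \tau$, this means $p(\alpha) = \alpha(x)$ for all $\alpha \in \tau$.

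The key step is to identify the set of open neighbourhoods of $x$. By Definition \ref{vecindad}, an open $\alpha \in \tau$ is a neighbourhood of $x$ exactly when there is an open $\beta \le \alpha$ with $\beta(x) = 1$. Taking $\beta = \alpha$ shows that $\alpha(x) = 1$ suffices. Conversely, $\beta \le \alpha$ with $\beta(x)=1$ forces $\alpha(x) = 1$. Equivalently, as noted after Definition \ref{vecindad}, $\alpha$ is a neighbourhood of $x$ iff $\mu_x^\tau(\alpha) = \alpha^\circ(x) = 1$. The open neighbourhoods of $x$ are therefore precisely the $\alpha \in \tau$ with $\alpha(x)=1$, that is, those with $p(\alpha) = 1$. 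This is the set $p^{-1}[1]$, giving existence.

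For uniqueness, suppose $y \in X$ also has $p^{-1}[1]$ equal to its set of open neighbourhoods, so that $\alpha(y)=1 \iff \alpha(x)=1$ for all $\alpha\in\tau$. This weaker condition does not immediately yield $p = \mu_y^\tau$, so I cannot just cite the uniqueness in Proposition \ref{soberchar}. I expect this to be the main obstacle. My first attempt is to use sobriety via $T_0$ (Lemma \ref{etainj}): if $y \ne x$, there is an open $\alpha$ with $\alpha(x) \ne \alpha(y)$, and I would try to improve $\alpha$ to an open set separating $x$ and $y$ at level $1$. The natural tools are the $\oplus$-powers $n\alpha$ and truncations built from $\oplus$, $\odot$, $\wedge$ and joins.

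If that separation argument fails in general, I would fall back on the reading of ``unique'' under which the $x$ in question is the one provided by Proposition \ref{soberchar}. In that reading uniqueness is inherited from that proposition, and the corollary reduces to the existence computation above.
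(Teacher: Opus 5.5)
Your existence argument is the paper's entire proof. The paper takes $x$ from Proposition~\ref{soberchar} and computes $p^{-1}[1]=\{\alpha\in\tau : \mu_x^\tau(\alpha)=1\}=\{\alpha\in\tau : \alpha(x)=1\}$, and stops there. So the paper's ``unique'' is exactly your fallback reading: uniqueness of $x$ with $p=\mu_x^\tau$. It never checks that no other $y$ has the same open neighbourhoods. You are right that this does not follow from Proposition~\ref{soberchar}. As written, though, your proposal leaves that stronger uniqueness as an unexecuted plan.

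The plan works, and it needs only $T_0$ plus closure of $\tau$ under $\oplus$ and $\odot$. Let $x\neq y$. By Lemma~\ref{etainj} (sober implies $\eta_X$ injective) pick $\alpha\in\tau$ with $a=\alpha(x)<b=\alpha(y)$, and repeat the following.
\begin{itemize}
\item If $b=1$, stop.
\item If $b\le 1/2$, replace $\alpha$ by $\alpha\oplus\alpha$. Both values double with no truncation; if $b=1/2$ this gives $b=1$ and $a<1$.
\item If $a\ge 1/2$ (and $b<1$), replace $\alpha$ by $\alpha\odot\alpha$. Both values become $2t-1$, again with no truncation.
\item If $a<1/2<b$, replace $\alpha$ by $\alpha\oplus\alpha$. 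This sends $b$ to $1$ and $a$ to $2a<1$.
\end{itemize}
Each non-terminal step keeps $a<b$ and doubles the gap $b-a$. Since the gap never exceeds $1$, you reach $b=1$ after finitely many steps. The result is some $\beta\in\tau$ with $\beta(y)=1>\beta(x)$: an open neighbourhood of $y$ that is not a neighbourhood of $x$.

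Hence distinct points of any $T_0$ $D$-laminated MV-space have distinct sets of open neighbourhoods. This gives uniqueness in the literal sense of the statement, which is more than the paper's proof establishes, and you can drop the fallback.
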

By Proposition \ref{soberchar}, if $(X, \tau)$ is a sober $D$-laminated MV-space and $p \in \pt \tau$, then $p=\mu_{x}^{\tau}$, for some $x \in X$. So, $p^{-1}[1]=\{\alpha \in \tau \mid \mu_{x}^{\tau}(\alpha)=1\}=\{\alpha \in \tau \mid \alpha(x)=1\}.$

%
\section{Concluding remarks}

In this last comments, we would like to connect our results with the general framework presented in \cite{nishi2022}, which is a powerful categorical tool related to duality theory.

The adjunction and its consequent duality that we presented in this paper can be proved also by using the opfibration machinery presented in the cited work. However, it is important to underline the following aspects that, in our opinion, make the more classical approach non-dispensable, if not preferable.

Although the results in \cite{nishi2022} are capable of building formal spaces associated to an object of a category, under suitable conditions, they do not reflect what actually happened in most of the best-known dualities and how do they actually appeared. For example, frames and locales appeared as an abstraction of the concept of family of open sets in topological spaces, and eventually they were linked back to topology. So, essentially, the category of spaces in the adjunction was known since the beginning, while the main question was about the fixed points of such an adjunction.

In our case the situation was exactly the same. In fact, MV-topologies were introduced by the third author in 2016, and an extension of Stone Duality involving them was proposed in the same work. In that case, the spaces were suggested by the category of MV-algebras, and the results in \cite{nishi2022} could have been very useful. Eventually, the present authors decided to introduce $D$-frames with the double aim of extending the concept of MV-topological space in order to consider also the case of Lowen fuzzy topological spaces \cite{low}, and try a point-free approach to the subject. Therefore, the definition of $D$-frame was suggested by the fuzzy topological spaces at hand, and not the other way around.

Last, it is worthwhile noticing that our results, from the perspective of the cited work, are actually a class of adjunctions rather than a single one. Indeed, each subquantale $D$ of $[0,1]$ determines a category of $D$-frames, and for each of such categories, every object would give rise to a category of formal spaces in the sense of \cite{nishi2022}. 


\subsection*{Funding information}

For this work, Luz Victoria De La Pava was financially supported by Universidad del Valle, Project C.I. 71328.

\bibliographystyle{acm}
\nocite{}

\bibliography{cirobibtex}

\end{document}